\newtheorem{thm}{Theorem}
\newtheorem{lma}{Lemma}
\newtheorem{prop}{Proposition}
\newtheorem{cor}{Corollary}
\newtheorem*{thm*}{Theorem}
\theoremstyle{definition}
\theoremstyle{remark}
\newtheorem{remark}{Remark}
\newtheorem{example}{Example}
\newcommand{\Ric}{\mbox{Ric}}
\newcommand{\R}{\mathbb R}
\newcommand{\be}{\begin{equation}}
\newcommand{\ee}{\end{equation}}
\newcommand{\bee}{\begin{equation*}}
\newcommand{\eee}{\end{equation*}}
\def\D{\Delta_f}
\def\na{\nabla}
\def\la{\langle}
\def\ra{\rangle}
\def\Pi{\displaystyle{\mathbb{II}}}
\def\a{\alpha}
\begin{document}

\title[]{Curvature estimates for steady Ricci solitons}

\author{Pak-Yeung Chan}
\address{School of Mathematics,
University of Minnesota, Minneapolis, MN 55455, USA} \email{chanx305@umn.edu}

\maketitle
\markboth{Pak-Yeung Chan} {Curvature estimates for steady Ricci solitons}

\begin{abstract} We show that for an $n$ dimensional complete non Ricci flat gradient steady Ricci soliton with potential function $f$ bounded above by a constant and curvature tensor $Rm$ satisfying $\overline{\lim}_{r\to \infty} r|Rm|<\frac{1}{5}$, then $|Rm|\leq Ce^{-r}$ for some constant $C>0$, improving a result of \cite{MunteanuSungWang-2017}. For any four dimensional complete non Ricci flat gradient steady Ricci soliton with scalar curvature $S\to 0$ as $r\to \infty$, we prove that $|Rm|\leq cS$ for some constant $c>0$, improving an estimate in \cite{CaoCui-2014}.  As an application, we show that for a four dimensional complete non Ricci flat gradient steady Ricci soliton, $|Rm|$ decays exponentially provided that $\overline{\lim}_{r\to \infty} rS$ is sufficiently small and $f$ is bounded above by a constant.
\end{abstract}

\section{Introduction}The notion of Ricci solitons was first introduced by Hamilton in \cite{Hamilton-1988}. Let $(M^n,g)$ be an $n$ dimensional Riemannian manifold and $X$ be a smooth vector field on $M$, the triple $(M,g,X)$ is said to be a Ricci soliton if there is a constant $\lambda$ such that the following equation is satisfied
\be\label{eq-RS-1}
\Ric+\dfrac{1}{2}L_{X}g=\lambda g,
\ee
where $Ric$ and $L_{X}$ denote the Ricci curvature and Lie derivative with respect to $X$ respectively. A Ricci soliton is called shrinking (steady, expanding) if $\lambda>0 (=0, <0)$. It is said to be gradient if $X$ can be chosen such that $X=\nabla f$ for some smooth function $f$ on $M$.  The soliton is called complete if $(M,g)$ is complete as a Riemannian manifold.

Ricci flow was introduced by Hamilton in his seminal work \cite{Hamilton-1982} and deforms the metric in the direction of its Ricci curvature:
\be\label{eqn of RF}
\frac{\partial g(t)}{\partial t}=-2\Ric(g(t)).
\ee
Since then, it has been one of the most important flows in geometric analysis. Due to its nonlinear nature, the flow may develop different types of singularities. A gradient Ricci soliton, as a self similar solution to the Ricci flow, often arises as a rescaled limit of the flow near its singularities. The study of Ricci soliton is therefore crucial for understanding the singularity formation of the Ricci flow \cite{Hamilton-1995}. Indeed, it played a significant role in the resolution of Poincare's conjecture by Perelman \cite{Perelman-2002}, \cite{Perelman-2003} and \cite{Perelman-2003.2}. Please consult the excellent survey paper \cite{Cao-2010} by Cao for more details in this direction.

Ricci soliton can also be viewed as a generalization of the Einstein metric $\Ric=\lambda g$. Bakry and Emery first introduced the Bakry Emery Ricci curvature $\Ric_f:=\Ric+\nabla^2 f$ in \cite{BakryEmery-1985}. The Bakry Emery curvature is closely related to the diffusion processes, logarithmic Sobolev inequalities and isoperimetric inequalities (see \cite{Lott-2003}, \cite{CarrilloNi-2009} and \cite{WeiWylie-2009}). Since $L_{\nabla f}g=2\nabla^2 f$, the gradient Ricci solitons equation \eqref{eq-RS-1} can be rewritten as
\be\label{eq-RS-2}
\Ric_{f}=\Ric+\nabla^2 f=\lambda g,
\ee
which is a natural extension of the Einstein metric.

%The curvature estimate of non-compact Ricci soliton is of great interest since the behavior of the curvature at infinity controls the geometry of the manifolds at infinity. For instances, Munteanu-Wang \cite{MunteanuWang-2015.3} showed that any $4$ dimensional gradient shrinker is smoothly asymptotically to cone if its scalar curvature $S$ $\to$ $0$ as $r\to \infty$. Deng-Zhu \cite{DengZhu-2016} improved a result of Brendle \cite{Brendle-2013} and showed that any 3 dimensional complete gradient steadier with linear scalar curvature decay must be rotationally symmetric. Chen-Deruelle \cite{ChenDeruelle-2015} (see also \cite{Deruelle-2017}) proved the existence of conical structure for gradient expanding Ricci soliton under $\overline{\lim}_{r\to\infty} r^2|Rm|<\infty$. Moreover, via curvature estimate, we can know more about the compactness of different classes of Ricci soliton. Munteanu-M. T. Wang \cite{MunteanuWangMT-2011} showed that a sequence of gradient Ricci shrinkers with uniform bounded $\Ric$ and certain local integral bound of $Rm$ will converge subsequentially in pointed Cheeger Gromov sense to an orbifold gradient shrinker.

Recently, there has been lots of researches concerning the curvature estimate of gradient Ricci soliton, for example, \cite{ChowLuYang-2011}, \cite{MunteanuWangMT-2011}, \cite{MunteanuWang-2015.3}, \cite{CaoCui-2014}, \cite{DengZhu-2015}, \cite{MunteanuWang-2016}, \cite{MunteanuSungWang-2017}, \cite{Deruelle-2017}, \cite{LiNiWang-2018}, \cite{Chen-2018}, \cite{LiNi-2019} and \cite{ChowFreedmanShinZhang-2019}. Chow-Freedman-Shin-Zhang \cite{ChowFreedmanShinZhang-2019} showed that any four dimensional complete gradient steady Ricci soliton arising as a blow up limit of finite time singularity of Ricci flow on closed manifold must have bounded curvature. For any four dimensional complete gradient shrinking Ricci soliton arising from similar way, they proved that the Riemann curvature $Rm$ of such shrinking Ricci soliton has at most quadratic growth. In \cite{MunteanuSungWang-2017}, Munteanu-Sung-Wang studied the solvability of weighted Poisson equation for certain class of smooth metric measure space. As an application, they proved the following:

\begin{thm}\cite{MunteanuSungWang-2017}\label{MSW curvature estimate} Let $(M^n, g,f)$ be an $n$ dimensional complete non Ricci flat gradient steady Ricci soliton with $n\geq 2$. Suppose the potential function $f$ is bounded above by a constant and  $\displaystyle\lim_{r\to \infty}r|Rm|=0$, then there exists a positive constant $C$ such that
\be\label{estimate of MSW}
|Rm|(x)\leq C(1+r(x))^{3(n+1)}e^{-r(x)},
\ee
where $r=r(x)$ is the distance of $x$ from a fixed point $p_0$ $\in M.$
\end{thm}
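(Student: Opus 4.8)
The plan is to combine the self-similarity structure of a gradient steady soliton with a weighted maximum principle and a Green's function argument for the drift Laplacian $\Delta_f=\Delta-\la\na f,\na\cdot\ra$, extracting the exponential rate $e^{-r}$ from the drift itself. First I would record the standard soliton identities: tracing $\Ric+\na^2 f=0$ gives $\Delta f=-S$, and the conserved quantity reads $S+|\na f|^2=C_0$ for a constant $C_0$. Since the soliton is non Ricci flat, $C_0>0$, so after rescaling take $C_0=1$; then $\Delta_f f=\Delta f-|\na f|^2=-1$, i.e. $\Delta_f(-f)=1$, and $|\na f|^2=1-S$. By B.-L.\ Chen's theorem $S\ge 0$, so $|\na f|\le 1$, and integrating along minimal geodesics gives $-f\le r+C$. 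The hypothesis that $f$ is bounded above, together with $|\na f|\to 1$ (which follows from $S\le c|Rm|=o(1/r)\to 0$), forces $-f$ to grow linearly, so that $\rho:=-f$ is a proper exhaustion comparable to the distance $r$. This comparability is precisely where the boundedness of $f$ enters.

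Next, because the soliton is a self-similar Ricci flow, the curvature tensor satisfies the elliptic equation $\Delta_f Rm=Rm*Rm$; pairing with $Rm$ and using Kato's inequality $|\na Rm|\ge|\na|Rm||$ yields, in the barrier sense on $\{|Rm|>0\}$,
\[
\Delta_f|Rm|\ge -c(n)|Rm|^2=-\big(c(n)|Rm|\big)|Rm|.
\]
The key point is that the zeroth order coefficient $c(n)|Rm|$ is $o(1/r)$ by hypothesis, so the nonlinearity behaves like a small linear potential far out. A direct computation using $\Delta_f(-f)=1$, $|\na f|^2=1-S$ and $S=o(1/r)$ then shows that the candidate barrier $\phi=(1+\rho)^{N}e^{f}$ satisfies $\Delta_f\phi\le-\tfrac{c}{\rho}\phi$ for $r$ large; since $e^{f}$ decays like $e^{-r}$ and $\rho\sim r$, $\phi$ reproduces exactly the asserted profile $(1+r)^{N}e^{-r}$, and the exponential rate is produced by the unit drift $\Delta_f(-f)=1$. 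Comparing $|Rm|$ with $A\phi$ via the substitution $|Rm|=\phi\,\eta$ removes the zeroth order term and leaves a pure drift operator for $\eta$, to which a maximum principle would apply.

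The difficulty is that the only a priori decay available is $|Rm|=o(1/r)$, which is far weaker than the target $e^{-r}$: the ratio $\eta=|Rm|/\phi$ blows up at infinity, so the naive comparison fails. The heart of the proof is therefore to upgrade polynomial decay to exponential decay, and this is exactly where the solvability of the weighted Poisson equation enters. Solving $\Delta_f u=-c|Rm|^2$ with $u\to 0$, one checks that $|Rm|-u$ is $\Delta_f$-subharmonic and tends to $0$, so a maximum principle at infinity gives the representation
\[
|Rm|\le u=c\int_M G_f(\cdot,y)\,|Rm|^2(y)\,e^{-f(y)}\,dV,
\]
where the weighted Green's function $G_f\ge 0$ of the drift Laplacian decays like $e^{-d(\cdot,y)}$ thanks to $\Delta_f(-f)=1$ and the weighted volume growth. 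Feeding the a priori bound into this representation and iterating improves the decay at each step, the weighted volume estimates generating the polynomial factor $(1+r)^{3(n+1)}$, while $\lim_{r\to\infty}r|Rm|=0$ guarantees the quadratic term can be absorbed so the iteration converges to the exponential rate.

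I expect the main obstacle to be establishing the exponential decay of $G_f$ — equivalently, the requisite weighted volume and heat kernel bounds for $\Delta_f$ on the soliton — and then controlling the nonlinear term $|Rm|^2$ uniformly through the iteration so that the scheme closes at the rate $(1+r)^{3(n+1)}e^{-r}$. The smallness hypothesis is not a cosmetic assumption here: it is what makes the absorption of the nonlinearity, and hence the convergence from polynomial to exponential decay, possible.
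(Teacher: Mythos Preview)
This theorem is not proved in the present paper: it is quoted from \cite{MunteanuSungWang-2017}, and the paper's contribution is the sharper Theorem~\ref{decay estimate in dim n}, proved by a different method. Your outline is in fact a reasonable sketch of the original Munteanu--Sung--Wang argument (weighted Poisson equation, Green's function decay for $\Delta_f$, feedback/iteration to upgrade $o(1/r)$ to exponential decay), and the paper explicitly says it does \emph{not} follow that route.

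What the paper does instead, for Theorem~\ref{decay estimate in dim n}, is a pure barrier/maximum-principle argument in the style of Deruelle. Starting from $\Delta_f u\ge -Au^2$ with $u=|Rm|$ and $A=5$, it first shows (Lemma~\ref{prelim estimate for u}) that $v^{\alpha}u$ is $\Delta_{f+2\alpha\ln v}$-subharmonic and $v^{3\alpha}e^{-v}$ is superharmonic near infinity (here $v=-f$, $\alpha\in(AB,1)$), yielding $u\le C v^2 e^{-v}$ directly by comparison. A second comparison (Proposition~\ref{sharp estimate for u}), using the pair $ue^{-v^{-1}}$ and $e^{-v}e^{-4v^{-1}}$ with respect to $\Delta_{f-2v^{-1}}$, strips the polynomial factor and gives $u\le Ce^{-v}$. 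Lemma~\ref{f-r is bdd} then converts $e^{-v}$ to $e^{-r}$. No Green's function estimates, heat kernel bounds, or iteration are needed; the smallness enters only as the algebraic condition $AB<1$ (i.e.\ $\limsup r|Rm|<1/5$) that makes $v^{\alpha}u$ subharmonic. Compared with your sketch, the paper's approach is more elementary, requires only $\limsup r|Rm|<1/5$ rather than $\lim r|Rm|=0$, and produces the clean bound $Ce^{-r}$ without the factor $(1+r)^{3(n+1)}$. The Green's function machinery you describe is heavier but is what actually appears in \cite{MunteanuSungWang-2017}; here it is bypassed entirely.
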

It is not known whether the decay rate on the right hand side of (\ref{estimate of MSW}) is sharp or not. In this paper, we shall sharpen the upper bound under a weaker curvature decay condition. Instead of applying the Green's function estimate in \cite{MunteanuSungWang-2017}, we adopt the method based on the maximum principle introduced by Deruelle \cite{Deruelle-2017} in order to study the curvature properties of expanding gradient Ricci solitons. Here is the main result of the paper:
\begin{thm} \label{decay estimate in dim n} Let $(M^n,g,f)$ be an $n$ dimensional complete non Ricci flat gradient steady Ricci soliton with $n\geq 2$. Suppose the potential function $f$ is bounded above by a constant and the curvature tensor $Rm$ satisfies $\displaystyle\limsup_{r\to \infty}   r|Rm|<\frac{1}{5}$. Then there exists a positive constant $C$ such that
\be\label{decay estimate in dim n ineq}
|Rm|(x)\leq Ce^{-r(x)} \text{  on  } M,
\ee
where $r=r(x)$ is the distance of $x$ from a fixed point $p_0$ $\in M.$
\end{thm}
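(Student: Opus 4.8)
The plan is to combine the elliptic differential inequality satisfied by $|Rm|$ on a steady soliton with a maximum-principle comparison against an exponential barrier built out of the potential function $f$, following the method of Deruelle rather than the Green's function bound of \cite{MunteanuSungWang-2017}.

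First I would record the standard soliton identities and normalize so that $S+|\na f|^2=1$. Since the soliton is non Ricci flat, $S>0$ (by $S\ge 0$ of B.-L.\ Chen together with the strong maximum principle), and $\Delta f=-S$ gives $\D f=\Delta f-|\na f|^2=-1$. Because $|S|\le c_n|Rm|$ and $\limsup_{r\to\infty}r|Rm|<\tfrac15$, we get $S\to 0$, hence $|\na f|\to 1$. Using that $f$ is bounded above and integrating $|\na f|$ along minimal geodesics, I would establish $-r(x)-c\le f(x)\le -r(x)+c$, so that $e^{f}$ and $e^{-r}$ are comparable and, moreover, $\la\na f,\na r\ra\to -1$ and $\D r\to 1$. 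This reduces the target $|Rm|\le Ce^{-r}$ to proving $|Rm|\le Ce^{f}$.

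Next I would derive the key inequality. From the self-similar form of Hamilton's curvature evolution, $\D Rm = Rm\ast Rm$, one gets $\D|Rm|^2=2|\na Rm|^2+2\la Rm,\D Rm\ra\ge 2|\na Rm|^2-2\kappa|Rm|^3$, and Kato's inequality then yields, in the barrier sense on $\{|Rm|>0\}$,
\be
\D|Rm|\ge -\kappa|Rm|^2,
\ee
so that $u:=|Rm|$ is a subsolution of the Schr\"odinger-type operator $\D+\kappa u$ whose potential satisfies $\kappa u<\kappa/(5r)$ in the far region. On an exterior domain $\Omega_R=\{r>R\}$ I would construct a supersolution of this operator of the form $b=e^{-\alpha r}$ (interpreted via Calabi's trick across the cut locus). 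A direct computation gives $\D b=(\alpha^2-\alpha\,\D r)\,b$, hence $\D b+\kappa u\,b=(\alpha^2-\alpha\,\D r+\kappa u)\,b\le 0$ as soon as $\alpha(\alpha-1)+\kappa/(5r)\le 0$; since $\D r\to 1$, this holds for $\alpha$ close to $1$ once $R$ is large, and it is precisely here that the threshold $\tfrac15$ enters, matching the sharp reaction constant $\kappa$ against the drift $\D r\to1$. Comparing $u$ with $A\,b$ on $\Omega_R$, with $A$ chosen so that $u\le A\,b$ on $\{r=R\}$ and with the comparison at infinity guaranteed by $u\to 0$ and $b\to 0$, yields $u\le Ce^{-\alpha r}$ for some $\alpha<1$.

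The final step, and the main obstacle, is to sharpen the exponent from $\alpha<1$ to $1$. Once $u\le Ce^{-\alpha r}$ is known, the potential $\kappa u$ is exponentially small and therefore integrable along the radial direction, so I would rerun the comparison with the barrier $e^{f}$, using $\D e^{f}=-Se^{f}\le 0$ together with the now-negligible reaction term and a bounded subexponential radial correction, to obtain $u\le Ce^{f}\le Ce^{-r}$. I expect the hardest point to be making this bootstrap close at the exact rate $e^{-r}$ rather than at $e^{-\alpha r}$ with $\alpha<1$: this is exactly where the quantitative hypothesis $\limsup_{r\to\infty}r|Rm|<\tfrac15$ is consumed, a larger constant yielding only a slower exponential rate. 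A secondary technical point will be the rigorous justification of the comparison principle on the non-compact region $\Omega_R$ and across the cut locus, which I would handle through the decay of $u$ and $b$ at infinity and standard barrier arguments.
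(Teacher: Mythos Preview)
Your outline has the right philosophy (maximum principle against an exponential barrier, in the spirit of Deruelle) but there are two concrete places where the argument as written would not go through.

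First, the barrier is built out of the distance $r$ and relies on the claim $\D r\to 1$. Only an \emph{upper} bound $\D r\le \tfrac{n-1}{r}+1$ is available (Wei--Wylie, using $\Ric_f=0$ and $|\na f|\le 1$); there is no matching lower bound, and the assertion $\la\na f,\na r\ra\to -1$ is likewise unjustified since nothing forces $\na f$ to align with geodesic rays. Consequently you cannot verify that $b=e^{-\alpha r}$ is a supersolution of $\D+\kappa u$: that requires $\D r\ge \alpha+\kappa u/\alpha$, i.e.\ a lower bound on $\D r$. The paper avoids this by working with $v:=-f$, for which one has the \emph{identity} $\D v=1$ (from $\D f=-1$); all barriers are then built from $v$, and the conversion to $e^{-r}$ is done only at the very end via the separate Lemma showing $|f+r|\le c_0$ once $S\le Ce^{f}$ is already known. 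In particular your early claim $-r-c\le f\le -r+c$ is circular: from $f$ bounded above and $S\to 0$ one only gets $-f\ge \alpha r-D$ for every $\alpha<1$, and the sharp two--sided bound needs the exponential decay you are trying to prove.

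Second, your comparison of $u=|Rm|$ against $A\,b$ via the operator $\D+\kappa u$ does not yield a maximum principle, because the zeroth--order coefficient $\kappa u$ is \emph{positive}; at a putative interior positive maximum of $u-Ab$ no contradiction arises. The paper's device is to pick $\alpha\in(5B,1)$ (here $B=\limsup r|Rm|<\tfrac15$ and $\kappa=5$, obtained by the Bianchi--identity rewriting of $\D Rm$) and show that $v^{\alpha}u$ is genuinely $\Delta_{f+2\alpha\ln v}$--subharmonic near infinity: the nonlinear term contributes $v^{\alpha-1}u(\alpha-5uv)+\text{l.o.t.}$, and $\alpha>5B\ge 5\limsup vu$ makes this nonnegative. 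One then compares with the explicit supersolution $v^{3\alpha}e^{-v}$, using that $v^{\alpha}u\to 0$ (since $\alpha<1$ and $vu$ is bounded). This first gives $u\le C(v^2+1)e^{-v}$; a second pass with the weights $e^{-v^{-1}}$ and $e^{-4v^{-1}}$ (now exploiting $v^2u\to 0$) sharpens this to $u\le Ce^{-v}$. Your bootstrap sketch gestures at this but does not supply the weights that actually absorb the reaction term; without them the comparison step does not close, and the role of the threshold $\tfrac15$ (namely the existence of $\alpha\in(5B,1)$) remains invisible.
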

\begin{remark}The decay rate is sharp on $\Sigma\times \mathbb{T}^{n-2}$, where $\Sigma$ and $\mathbb{T}^{n-2}$ denote the Hamilton's cigar soliton and $n-2$ dimensional flat torus respectively (see Example \ref{cigar soliton} in Section 2).
 \end{remark}

%\begin{remark} For the $n$ dimensional Bryant soliton, $\lim_{r\to \infty}r|Rm|=\sqrt{\frac{(n-1)}{2(n-2)}}>\frac{1}{5}$ (see \cite{Brendle-2013} and \cite{Brendle-2014}). One expects the constant $\frac{1}{5}$ in Theorem \ref{decay estimate in dim n} is not optimal.
%\end{remark}

%\begin{remark} The constant $\frac{1}{5}$ in Theorem \ref{decay estimate in dim n} is technical and arises from a differential inequality satisfied by $|Rm|$. For the $n$ dimensional Bryant soliton, $\lim_{r\to \infty}r|Rm|=\sqrt{\frac{(n-1)}{2(n-2)}}>\frac{1}{5}$ (see \cite{Brendle-2013} and \cite{Brendle-2014}). One expects the constant $\frac{1}{5}$ is not optimal.
%\end{remark}

\begin{remark} The constant $\frac{1}{5}$ in Theorem \ref{decay estimate in dim n} is technical. For the $n$ dimensional Bryant soliton, $\lim_{r\to \infty}r|Rm|=\sqrt{\frac{(n-1)}{2(n-2)}}>\frac{1}{5}$ (see \cite{Brendle-2013} and \cite{Brendle-2014}). It is unclear at this point whether the constant $\frac{1}{5}$ is optimal.
\end{remark}

 \begin{remark}
 In view of Lemma \ref{eqv cond for proper of f}, the conclusion (\ref{decay estimate in dim n ineq}) is still true under a seemingly weaker condition on $f$, namely, $\overline{\lim}_{r\to\infty} r^{-1}f<1$, for the sake of simplicity, we state the theorem in the above way. $\Ric \geq 0$ and $S\to 0$ at infinity are sufficient for $f$ to be bounded above by a constant (see \cite{CarrilloNi-2009}).
  \end{remark}
\begin{remark}
  The quantity $r|Rm|$ is not scaling invariant, we are using the scaling convention such that (\ref{eqn of naf}) holds (see Section 2).
\end{remark}
%In view of theorem \ref{MSW curvature estimate} and \ref{decay estimate in dim n}, one possible conjecture concerning the decay of curvature of gradient steady Ricci soliton is that, the decay rate is either linear or exponential. It would be very useful for the classification of steady soliton If one can establish this dichotomy. For exponential decay of curvature, Deng-Zhu \cite{DengZhu-2018} generalized a result of \cite{MunteanuSungWang-2017} and showed that any non-flat complete gradient steadier with non-negative sectional curvature is a quotient of $\R^{n-2}\times \Sigma$ if $rS$ is small near infinity, where $\Sigma$ is the cigar soliton. Deng-Zhu \cite{DengZhu-2016} improved a result of Brendle \cite{Brendle-2013} and proved that any non-flat three dimensional gradient steady soliton with scalar curvature decays exactly linearly is the Bryant soliton. See \cite{Brendle-2014} and \cite{DengZhu-2017} for more classification results of higher dimensional steadier with curvature decay conditions.

In view of Theorem \ref{MSW curvature estimate} and \ref{decay estimate in dim n}, one possible conjecture concerning the curvature
decay of gradient steady Ricci soliton is that, the decay rate is either linear or exponential. It would be very useful for the classification of steady soliton if one can establish this dichotomy. There are a number of classification results under certain curvature decay conditions.  For exponential curvature decay, Deng-Zhu \cite{DengZhu-2018} generalized a result of \cite{MunteanuSungWang-2017} and showed that any non-flat complete gradient steady Ricci soliton with non-negative sectional curvature is a quotient of $\R^{n-2}\times \Sigma$ if $rS$ is small near infinity, where $\Sigma$ and $S$ denote the cigar soliton and scalar curvature respectively (see also \cite{Deruelle-2012} and \cite{CatinoMastroliaMonticelli-2016}). For linear curvature decay, Brendle \cite{Brendle-2013} solved a conjecture of Perelman \cite{Perelman-2002} and proved that any three dimensional non-flat and non-collapsed gradient steady Ricci soliton is the Bryant soliton.  Deng-Zhu \cite{DengZhu-2016}, \cite{DengZhu-2018} improved the result of Brendle \cite{Brendle-2013} and proved that any non-flat three dimensional gradient steady soliton with linear curvature decay is the Bryant soliton. See \cite{Brendle-2014} and \cite{DengZhu-2017} for more classification results of higher dimensional steady Ricci soliton with curvature decay conditions.

It is also worth mentioning that very lately Brendle \cite{Brendle-2018} confirmed a conjecture by Perelman \cite{Perelman-2002} and showed that any three dimensional complete non-compact, non-collapsed ancient solution to the Ricci flow with bounded positive sectional curvature is eternal and isometric to the Bryant soliton (see also \cite{LiZhang-2018}, \cite{BamlerKleiner-2019} and \cite{Brendle-2019}).

The second part of the paper is devoted to the estimate in dimension four. Munteanu-Wang \cite{MunteanuWang-2015.3} proved that a complete four dimensional gradient shrinking Ricci soliton must have bounded Riemann curvature $Rm$ if its scalar curvature $S$ is bounded (see \cite{CaoCui-2014} for the estimate of steady soliton by Cao-Cui). Furthermore, $|Rm|$ is controlled by $S$ pointwise, more precisely, there exists a constant $c>0$ such that
\be\label{MW 2015 estimate}
|Rm|\leq cS \text{  on  } M.
\ee
In dimension three, (\ref{MW 2015 estimate}) is a direct consequence of the Hamilton-Ivey estimate which implies that three dimensional complete gradient shrinking and steady Ricci solitons have non-negative sectional curvature (\cite{Ivey-1993}, \cite{Hamilton-1995} and \cite{Chen-2009}). However in dimension four, the sectional curvature of shrinking and steady Ricci solitons may change sign (see \cite{Cao-2010}).
%The estimate (\ref{MW 2015 estimate}) can be viewed as a four dimensional analog of Hamilton-Ivey pinching estimate for soliton.

Recently, Conlon-Deruelle-Sun \cite{ConlonDeruelleSun-2019} classified all non-flat gradient K$\ddot{a}$hler shrinking Ricci soliton of complex dimension two with $\lim_{r\to\infty}S=0$ and showed that it is isometric to the Feldman-Ilmanen-Knopf shrinking Ricci soliton \cite{FeldmanIlmanenKnopf} (see \cite{ConlonDeruelleSun-2019} for more results on gradient K$\ddot{a}$hler shrinking and expanding Ricci solitons).

One special feature observed by Munteanu-Wang \cite{MunteanuWang-2015.3} is that in four dimensional gradient Ricci soliton, the Riemann curvature $Rm$ can be bounded by $\Ric$ and $\na \Ric$:
\be\label{Rm controlled by Rc temp}
|Rm|\leq A_0\Big(|\Ric|+\frac{|\na \Ric|}{|\na f|}\Big),
\ee
for some universal positive constant $A_0$. Please see Lemma \ref{Rm to Rc} for the precise statement and proof of (\ref{Rm controlled by Rc temp}).

We will show that estimate (\ref{MW 2015 estimate}) is also true for four dimensional non-trivial gradient steady Ricci soliton with scalar curvature $S\to 0$ as $r \to \infty$. Under the additional assumption that $S$ has at most polynomial decay, Cao-Cui \cite{CaoCui-2014} proved that for any $a$ $\in (0,\frac{1}{2})$, there is a positive constant $C$ such that
$$|Rm|\leq CS^a.$$
Here is another main result of this paper:

\begin{thm}\label{Rm by S}Let $(M^4,g,f)$ be a four dimensional complete non Ricci flat gradient steady Ricci soliton with $\displaystyle\lim_{r\to \infty} S=0$. There exists a positive constant $c$ such that
\be\label{Rm by S ineq}
|Rm|\leq cS \text{  on  } M.
\ee
\end{thm}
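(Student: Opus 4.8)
The plan is to reduce everything to the four dimensional comparison of Lemma \ref{Rm to Rc} and then to bound each of its two right hand terms by $S$. Throughout I would use the structural facts available on a steady soliton: with the normalization of Section~2 one has $S+|\na f|^2=1$ and the identities $\na S=2\Ric(\na f)$, $\D S=-2|\Ric|^2$, $\D R_{ij}=-2R_{ikjl}R_{kl}$, while $\D R_{ijkl}$ is quadratic in $Rm$. Since the soliton is not Ricci flat, $\D S\le 0$ and the strong maximum principle give $S>0$ everywhere; moreover $S$ is bounded, so that $|Rm|$ is bounded as well, as in the four dimensional theory of \cite{MunteanuWang-2015.3} and \cite{CaoCui-2014}. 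Because $S\to 0$, the set $K:=\{S\ge \tfrac12\}$ is compact; on $K$ the function $S$ is bounded below by a positive constant while $|Rm|$ is bounded, so $|Rm|\le cS$ holds there for free, whereas outside $K$ one has $|\na f|^2=1-S\ge\tfrac12$. Hence, via Lemma \ref{Rm to Rc}, the theorem follows once the pointwise bounds $|\Ric|\le cS$ and $|\na\Ric|\le cS$ are established on $M\setminus K$, since there $|\na\Ric|/|\na f|\le\sqrt2\,|\na\Ric|$.

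As a warm up for the first bound I would run the $f$-maximum principle on $u:=|\Ric|^2/S$. Using $\D S=-2|\Ric|^2$ and the reaction estimate $\la\Ric,\D\Ric\ra\ge -c|Rm|\,|\Ric|^2$, a direct computation shows that at an interior maximum of $u$ (obtained after a standard localization, $M$ being complete and noncompact) the gradient terms cancel and one is left with $0\ge \D u\ge 2u^2-c|Rm|\,u$, whence $u\le c|Rm|\le c'$; that is, $|\Ric|^2\le c'S$. This already forces $|\Ric|\to 0$, but only at the square-root rate $|\Ric|\lesssim S^{1/2}$, which is too weak for \eqref{Rm by S ineq}. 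Upgrading this to the \emph{linear} bound $|\Ric|\le cS$ is the step I expect to be the main obstacle.

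For the linear bound I would argue by contradiction and blow-up. If it failed there would be $x_k\to\infty$ with $|\Ric|(x_k)/S(x_k)\to\infty$; rescaling $g$ by $Q_k:=|Rm|(x_k)$ and passing to a pointed limit yields a complete four dimensional gradient steady soliton $(M_\infty,g_\infty,f_\infty)$ with $|Rm|_{g_\infty}(x_\infty)=1$, while $S_{g_\infty}(x_\infty)=0$ because $S(x_k)\le c\,S(x_k)/|\Ric|(x_k)\cdot Q_k$ is negligible compared with $Q_k$. As $S_{g_\infty}\ge 0$ and $\D S_{g_\infty}=-2|\Ric_\infty|^2\le 0$, the strong maximum principle forces $S_{g_\infty}\equiv 0$, hence $\Ric_\infty\equiv 0$; but then Lemma \ref{Rm to Rc} applied on the limit gives $|Rm|_{g_\infty}\le A_0\big(|\Ric_\infty|+|\na\Ric_\infty|/|\na f_\infty|\big)=0$, contradicting $|Rm|_{g_\infty}(x_\infty)=1$. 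The delicate point is precisely the extraction of this limit: one must guarantee that the blow-up is non-degenerate (the curvature scale $Q_k$ stays bounded away from $0$, so that no collapsing occurs and the distances do not break down) and that the soliton structure survives in the limit with $|\na f_\infty|>0$, so that Lemma \ref{Rm to Rc} is applicable; controlling the potential and the local geometry uniformly along the sequence is the heart of the argument.

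Finally, for the gradient bound $|\na\Ric|\le cS$ on $M\setminus K$ I would invoke interior elliptic estimates for the system $\D R_{ij}=-2R_{ikjl}R_{kl}$, whose coefficient $|Rm|$ is bounded and which sits in bounded local geometry (Shi's estimates give uniform bounds on $\na^k Rm$); these give $|\na\Ric|(x)\le C\sup_{B(x,1)}|\Ric|$. The already proven bound $|\Ric|\le cS$ turns this into $|\na\Ric|(x)\le C\sup_{B(x,1)}S$, and the Harnack-type inequality $|\na\log S|=2|\Ric(\na f)|/S\le 2|\Ric|/S\le c$ (again using $|\Ric|\le cS$) shows that $S$ varies by a bounded factor on unit balls, so $\sup_{B(x,1)}S\le C\,S(x)$. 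Combining gives $|\na\Ric|\le cS$ on $M\setminus K$, and dividing by $|\na f|\ge 2^{-1/2}$ there, Lemma \ref{Rm to Rc} yields $|Rm|\le cS$ outside $K$. Together with the trivial bound on $K$, this proves \eqref{Rm by S ineq}.
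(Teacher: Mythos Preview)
Your blow-up step has a genuine gap that you have identified but not closed. Extracting a pointed smooth limit of $(M,Q_k g,x_k)$ requires a uniform injectivity-radius lower bound, and nothing in the hypotheses supplies one; a general steady soliton can collapse (take $\Sigma\times\mathbb{T}^2$). Even setting that aside, since $x_k$ is chosen so that $|\Ric|(x_k)/S(x_k)\to\infty$ rather than by point-picking for $|Rm|$, you have no control ruling out $Q_k=|Rm|(x_k)\to 0$; in that case the rescaled derivatives $|\na^j Rm|_{Q_k g}\le C_jQ_k^{-1-j/2}$ blow up, and $|\na f|_{Q_k g}=Q_k^{-1/2}|\na f|_g\to\infty$, so neither uniform higher-order curvature bounds nor a limiting potential $f_\infty$ with $|\na f_\infty|>0$ are available, and Lemma~\ref{Rm to Rc} cannot be invoked on the limit. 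Without an auxiliary non-collapsing hypothesis (not assumed here), I do not see how to carry out this step.

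The paper avoids compactness altogether. The linear bound $|\Ric|\le A_1S$ is obtained directly by the maximum principle applied to $|\Ric|+|\Ric|^2-CS$: the point is that $\D|\Ric|^2$ produces a good $+|\na\Ric|^2$ term which, after completing the square, absorbs the bad $-|\Ric|\,|\na\Ric|/|\na f|$ coming from $\D|\Ric|$ through Lemma~\ref{Rm to Rc} (this is Lemma~\ref{Rc by S}). The full estimate $|Rm|\le cS$ then follows from a second maximum-principle argument on $W:=S^{-1}|Rm|+\lambda\,S^{-2}|\Ric|^2$, where the $S^{-2}|\na\Ric|^2$ contribution to $\Delta_{f-2\ln S}(S^{-2}|\Ric|^2)$ dominates $S^{-2}|Rm|^2$ via Lemma~\ref{Rm to Rc}; localization uses a cutoff in $r$ together with the Wei--Wylie comparison $\D r\le 3/r+1$, so no properness of $f$ is needed. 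Your Step~3 (interior estimates plus the Harnack $|\na\ln S|\le 2A_1$) would indeed work once $|\Ric|\le A_1S$ is in hand, but that is precisely the missing ingredient.
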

%A Riemannian manifold $(M,g)$ is said to be Ricci pinched if for some $\varepsilon>0$, $\Ric\geq \varepsilon Sg$ on $M$. It is known that any $n$ dimensional complete Ricci pinched gradient steadier with non-negative sectional curvature must be flat if $n\geq 3$ (see \cite{ChowLuNi-2006}). Deng-Zhu \cite{DengZhu-2015} showed that any complete Ricci pinched gradient K$\ddot{a}$hler steadier with complex dimension $\geq 2$ is Ricci flat. Using theorem \ref{Rm by S} and the same argument in \cite{ChowLuNi-2006}, we have the following
%\begin{cor}
%Any four dimensional complete Ricci pinched gradient steady Ricci soliton is Ricci flat.
%\end{cor}

As an application, we show that the Riemann curvature $Rm$ of a four dimensional complete nontrivial gradient steady Ricci soliton decays exponentially if the potential $f$ is bounded from above and $\overline{\lim}_{r\to \infty} rS$ is sufficiently small.
\begin{thm} \label{decay estimate in dim 4} Let $(M^4,g,f)$ be a four dimensional complete non Ricci flat gradient steady Ricci soliton with $\displaystyle\lim_{r\to \infty} S=0$. Suppose the potential function $f$ is bounded from above by a constant and $\displaystyle\overline{\lim_{r\to\infty}} rS<\frac{1}{5A_0^2}$, where $A_0$ is the constant in (\ref{Rm controlled by Rc temp}). Then there is a constant $C>0$ such that
$$|Rm|(x)\leq Ce^{-r(x)} \text{ on  } M.$$
\end{thm}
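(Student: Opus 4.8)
The plan is to derive Theorem~\ref{decay estimate in dim 4} by combining the pointwise comparison of Theorem~\ref{Rm by S} with the exponential decay criterion of Theorem~\ref{decay estimate in dim n}. Since $(M^4,g,f)$ is a non Ricci flat gradient steady soliton with $\lim_{r\to\infty}S=0$, Theorem~\ref{Rm by S} applies and gives $|Rm|\le cS$; in particular $|Rm|$ is bounded and tends to $0$ at infinity. The crucial point is to sharpen the comparison constant near infinity so that it matches the $A_0$ appearing in the hypothesis. The proof of Theorem~\ref{Rm by S} rests on the dimension four estimate (\ref{Rm controlled by Rc temp}) of Lemma~\ref{Rm to Rc}, and I would re-examine it with two asymptotic inputs. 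First, the contracted steady soliton identity shows that $S+|\nabla f|^2$ is constant, so with the scaling convention (\ref{eqn of naf}) and $S\to 0$ one has $|\nabla f|\to 1$; thus the denominator in (\ref{Rm controlled by Rc temp}) is harmless at infinity. Second, I would show that the Ricci curvature and its covariant derivative are asymptotically controlled by the scalar curvature in the combined form $|\Ric|+\frac{|\nabla\Ric|}{|\nabla f|}\le A_0\,S\,(1+o(1))$ as $r\to\infty$.

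Substituting this into (\ref{Rm controlled by Rc temp}) yields $|Rm|\le A_0^2\,S\,(1+o(1))$ near infinity, whence
\[
\limsup_{r\to\infty} r|Rm|\ \le\ A_0^2\,\limsup_{r\to\infty} rS .
\]
The hypothesis $\limsup_{r\to\infty} rS<\frac{1}{5A_0^2}$ then gives
\[
\limsup_{r\to\infty} r|Rm|\ \le\ A_0^2\,\limsup_{r\to\infty} rS\ <\ A_0^2\cdot\frac{1}{5A_0^2}\ =\ \frac{1}{5},
\]
the strict inequality surviving because the assumption on $rS$ is strict.

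Since $f$ is bounded above by a constant and $\limsup_{r\to\infty} r|Rm|<\frac{1}{5}$, the hypotheses of Theorem~\ref{decay estimate in dim n} are met with $n=4$, and that theorem delivers $|Rm|(x)\le Ce^{-r(x)}$ on $M$, as claimed. I expect the main obstacle to lie in the first step: verifying that the asymptotic ratio $|Rm|/S$ is bounded by $A_0^2$ rather than by the unspecified constant $c$ of Theorem~\ref{Rm by S}. This relies both on the normalization $|\nabla f|\to 1$ and on controlling the gradient term $\frac{|\nabla\Ric|}{|\nabla f|}$ by $S$ near infinity, which in turn draws on the soliton identities and the interior derivative estimates underlying the proof of Theorem~\ref{Rm by S}. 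Tracking these constants explicitly, instead of absorbing them into a generic $c$, is the heart of the matter.
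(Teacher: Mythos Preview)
Your overall strategy matches the paper's: reduce to Theorem~\ref{decay estimate in dim n} by showing $\limsup_{r\to\infty} r|Rm|\le A_0^2\limsup_{r\to\infty} rS<\tfrac{1}{5}$. The gap lies in the step you correctly flag as the heart of the matter. The asserted bound $|\Ric|+\frac{|\nabla\Ric|}{|\nabla f|}\le A_0\,S\,(1+o(1))$ is too strong as written: there is no a priori lower bound on $S$ better than $S\ge Ce^{f}$, so an error term cannot be absorbed multiplicatively into $S$. What one can and does prove is the additive estimate $|\Ric|+\frac{|\nabla\Ric|}{|\nabla f|}\le A_0\,S+O(v^{-3/2})$ with $v=-f\sim r$; then $r$ times the error is $O(r^{-1/2})\to 0$ and the desired $\limsup$ inequality follows.

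More importantly, this sharpened comparison does not come from ``tracking constants'' through the proof of Theorem~\ref{Rm by S}: the constant there depends on $A_1$ from Lemma~\ref{Rc by S} and on $\sup_{B_{R_0}}S^{-1}|Rm|$, with no mechanism to reduce it to $A_0^2$. The paper instead argues in two separate steps. First, from Theorem~\ref{Rm by S} and the hypothesis one has $|Rm|\le C_0 v^{-1}$; viewing the soliton as an ancient Ricci flow, Shi's derivative estimate then gives $|\nabla\Ric|\le c|\nabla Rm|\le C_1 v^{-3/2}$, which handles the gradient term outright. Second, for $|\Ric|$ itself one runs a new maximum principle on $|\Ric|-A_0 S-Cv^{-3/2}$ (Proposition~\ref{Rc by A0S}): since $\D S=-2|\Ric|^2$ while $\D|\Ric|\ge -2A_0|\Ric|^2-2A_0|\Ric|\frac{|\nabla\Ric|}{|\nabla f|}$, the leading $-2A_0|\Ric|^2$ terms cancel exactly, and the residual $-2A_0|\Ric|\frac{|\nabla\Ric|}{|\nabla f|}=O(v^{-5/2})$ is absorbed by $\D(-Cv^{-3/2})=\tfrac{3C}{2}v^{-5/2}+\text{l.o.t.}$ for $C$ large. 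This exact cancellation, not bookkeeping in Theorem~\ref{Rm by S}, is what pins the constant to $A_0$.
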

%\begin{remark}
%Just like the case in theorem \ref{decay estimate in dim n}, the estimate is sharp and the above conclusion is still true under weaker condition on $f$, namely, $\overline{\lim}_{r\to\infty} r^{-1}f<1$.
%\end{remark}
The paper is organized as follows. In Section $2$, we introduce the basic preliminaries and notations needed in the subsequent sections. In Section $3$, we provide estimates for the growth of the potential function. Theorem \ref{decay estimate in dim n} will then be proved in Section $4$. In Section $5$, we will show Theorem \ref{Rm by S} and \ref{decay estimate in dim 4}.

{\sl Acknowledgement}: The author is greatly indebted to his advisor Prof. Jiaping Wang for his constant support, guidance and valuable discussions. The author would also like to thank Prof. Huai-Dong Cao, Prof. Ben Chow, Prof. Ovidiu Munteanu and Prof. Lei Ni for their comments and interests in this work. The author was partially supported by NSF grant DMS-1606820.

\section{preliminaries and notations}

Let $(M^n,g)$ be a connected smooth $n$ dimensional Riemannian manifold and $f$ be a smooth function on $M$. $(M^n,g,f)$ is said to be a gradient steady Ricci soliton with potential function $f$ if
\be\label{eq-RS-22}
\Ric+\na^2 f=0.
\ee
%Given any potential function $f$ which satisfies (\ref{eq-RS-22}), we consider the function $f_{\sigma}:=f+\sigma$, where $\sigma$ is any constant. It is clear that $f_{\sigma}$ also satisfies (\ref{eq-RS-22}). Our argument doesn't depend on which potential function we choose. Let us fix one and denote it by $f$ hereinafter.
$(M,g)$ is assumed to be complete as a Riemannian manifold throughout this paper. Ricci soliton is related to the self similar solution to the Ricci flow (solution which evolves by time dependent scaling and reparametrization of a fixed metric, see \cite{Chowetal-2007}). We consider the flow of the vector field $\na f$ and denote it by $\phi_t$, $\phi_0$ is the identity map. By a result of Zhang \cite{Zhang-2009}, $\phi_t$ exists for all time $t$ $\in \R$. Let $g(t):=\phi_t^*g$, then $g(t)$ is a solution to the Ricci flow with initial data $g(0)=g$.

 We use $S$ to denote the scalar curvature of the Riemannian manifold. For any smooth function $\omega$, the weighted Laplacian with respect to $\omega$ is defined to be the operator $\Delta_{\omega}:=\Delta-\na_{\na\omega}$. Fix a particular point $p_0$ $\in M$,  for any $x$ $\in M$, we denote the distance of $x$ from $p_0$ by $r =r(x) =d(x, p_0)$. The following equations are known for gradient steady Ricci soliton (see \cite{Chowetal-2007} or \cite{PetersenWylie-2010} for derivation of these formulas):
\be
S+\Delta f= 0,
\ee
\be\label{na S}
\na S=2\Ric(\na f),
\ee
\be\label{eqn of S}
\D S=-2|\Ric|^2 \text{ and }
\ee
\be\label{eqn of Rc}
\D R_{ij}=-2R_{iklj}R_{kl}.
\ee
It was proved by Hamilton \cite{Hamilton-1995} that $S+|\na f|^2=C$ for some constant $C$. When the metric is not Ricci flat, $S$ and hence $C$ are positive on $M$ (see the discussion after (\ref{eqn of nav})). We can normalize $C$ by scaling the metric and get:
%When the metric is not Ricci flat, we also have (by scaling the metric if necessary):
\be \label{eqn of naf}
S+|\na f|^2=1 \text{ and }
\ee
\be\label{eqn of f}
\Delta_f f=-1.
\ee
We define a function $v$ in the following way:
\be\label{lower bdd of v}
v:= -f.
\ee
From (\ref{eqn of naf}) and (\ref{eqn of f}), we have
\be\label{eqn of v}
\D v=1 \text{    and   }
\ee
\be \label{eqn of nav}
S+|\na v|^2=1.
\ee
Chen \cite{Chen-2009} showed that any complete ancient solution to the Ricci flow must have non-negative scalar curvature. As a result, the scalar curvature $S$ of a complete gradient steady Ricci soliton is non-negative (see also \cite{Zhang-2009}). Moreover by strong minimum principle, $S=0$ somewhere if and only if the steady soliton is Ricci flat. It is also known that any compact steady Ricci soliton must be Ricci flat (see \cite{Chowetal-2007}). Hence any complete non Ricci flat gradient steady Ricci soliton must be non-compact and have positive scalar curvature.

We give several well known examples of steady Ricci solitons:
\begin{example} Ricci flat manifolds or product of any two steady Ricci solitons.
\end{example}

\begin{example}\label{cigar soliton}\cite{Hamilton-1988}, \cite{Chowetal-2007} Hamilton's cigar soliton $\Sigma$: Hamilton constructed a two dimensional complete gradient steady soliton on $\R^2$. It is rotationally symmetric with positive sectional curvature. In the standard coordinate of $\R^2$, its metric can be written as follows (see \cite{Cao-2010})
$$g_{\Sigma}=\frac{4(dx^2+dy^2)}{1+x^2+y^2}.$$
The potential function $f$ is given by $f(x,y)=-\log(1+x^2+y^2)$. The scalar curvature $S$ decays like $e^{-r}$ as $r\to \infty$.
\end{example}

\begin{example} \cite{Bryant-2005}, \cite{Chowetal-2007} Bryant soliton: Bryant constructed an $n$ dimensional complete gradient steady soliton on $\R^n$, where $n\geq 3$. It is rotationally symmetric with positive sectional curvature. The scalar curvature $S$ decays like $r^{-1}$ as $r\to \infty$.
\end{example}
For more examples of steady Ricci solitons, see \cite{Cao-2010}. We will give a proof of (\ref{Rm controlled by Rc temp}). The following lemma is due to Munteanu-Wang \cite{MunteanuWang-2015.3} and stated in a slightly different form in \cite{MunteanuWang-2015.3} and \cite{CaoCui-2014}. For the sake of completeness, we sketch the proof here.
\begin{lma}\label{Rm to Rc}\cite{MunteanuWang-2015.3} Let $(M^4,g,f)$ be a four dimensional gradient Ricci soliton. There exists a universal positive constant $A_0$ such that if $\na f \neq 0$ at $q$ $\in M$, then at $q$
\be\label{Rm controlled by Rc}
|Rm|\leq A_0\Big(|\Ric|+\frac{|\na \Ric|}{|\na f|}\Big).
\ee
\end{lma}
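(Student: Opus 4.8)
The plan is to separate the components of $Rm$ into those that contract with $\na f$ and those transverse to it, control the former directly through a soliton identity, and handle the latter using the fact that in dimension three the full curvature tensor is determined by its Ricci part.

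First I would differentiate the soliton equation $\Ric+\na^2 f=0$ and commute covariant derivatives. Applying the Ricci identity to the one form $\na f$ and substituting $\na^2 f=-\Ric$ produces the well known identity
\be
R_{ijkl}\na_l f=\na_i R_{jk}-\na_j R_{ik}.
\ee
Thus the contraction of $Rm$ with $\na f$ in its last slot is bounded pointwise by $|\na\Ric|$, independently of dimension. Since $\na f\neq 0$ at $q$, I would then choose an orthonormal frame $\{e_1,e_2,e_3,e_4\}$ at $q$ with $e_1=\na f/|\na f|$. In this frame the identity reads $|\na f|\,R_{ijk1}=\na_i R_{jk}-\na_j R_{ik}$, so
\be
|R_{ijk1}|\leq \frac{2|\na\Ric|}{|\na f|}\qquad \text{for all } i,j,k.
\ee
By the algebraic symmetries of $Rm$, every component carrying at least one index equal to $1$ reduces up to sign to one of the form $R_{abc1}$ and is therefore bounded by $2|\na\Ric|/|\na f|$; in particular the sectional components $R_{1j1k}=-R_{1jk1}$ are controlled.

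The only components left are the $R_{ijkl}$ with all four indices in $\{2,3,4\}$. These constitute an algebraic curvature tensor on the three dimensional space $\mathrm{span}\{e_2,e_3,e_4\}$, and because the Weyl tensor vanishes in dimension three, such a tensor is a universal linear expression in its own Ricci contraction $\tilde R_{jk}:=\sum_{i=2}^{4}R_{ijik}$. The crucial observation is that $\tilde R_{jk}$ differs from the genuine Ricci $R_{jk}$ only by $R_{1j1k}$, which was already bounded in the previous step; hence $|\tilde R_{jk}|\leq |\Ric|+2|\na\Ric|/|\na f|$, and the three dimensional reconstruction formula bounds the transverse block by a universal multiple of $|\Ric|+|\na\Ric|/|\na f|$. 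Combining the two families of components yields the asserted inequality with a universal constant $A_0$.

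The routine inputs here are the Bianchi/soliton identity and the curvature symmetries; the essential and only genuinely four dimensional step is the reduction of the transverse block to a three dimensional algebraic curvature tensor, where the vanishing of the Weyl tensor allows the full tensor to be recovered from its Ricci part. I would expect the bookkeeping of constants in the three dimensional reconstruction to be the most delicate part to state cleanly, though it is entirely elementary.
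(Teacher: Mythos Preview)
Your proof is correct and follows essentially the same approach as the paper: both use the soliton identity to bound components of $Rm$ carrying an index in the $\na f$ direction by $|\na\Ric|/|\na f|$, and both exploit the three-dimensional structure of the transverse block to bound the remaining components by Ricci. The only difference is presentational: where the paper writes down the explicit identity $R_{1221}=\tfrac{1}{2}(R_{11}+R_{22}-R_{33}-R_{44})+R_{3443}$, you invoke the equivalent fact that the three-dimensional Weyl tensor vanishes, which is exactly what produces that formula.
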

\begin{proof}This proof was presented by Munteanu during one of his talks on \cite{MunteanuWang-2015.3}. Whenever $\na f \neq 0$, we consider an orthonormal frame $\displaystyle\{e_i\}_{i=1}^4$ with $e_4=\frac{\na f}{|\na f|}$. For any gradient Ricci soliton, by Ricci identity and (\ref{eq-RS-2})
$$R_{kl,i}-R_{ki,l}=R_{ilkj}f_j.$$
Hence for any $1\leq i,j,k\leq 4$,
$$|R_{ijk4}|\leq \frac{c|\na \Ric|}{|\na f|}.$$
Since the dimension is four, we have by direct computation,
$$R_{1221}=\frac{1}{2}(R_{11}+R_{22}-R_{33}-R_{44})+R_{3443}.$$
From the above identity, we see that Lemma \ref{Rm to Rc} holds.
\end{proof}

\section{Estimates on the potential function $f$}
We first provide some equivalent criteria for the properness of the potential function $f$ which will be used in later sections. Please see \cite{CarrilloNi-2009} and \cite{CaoChen-2012} for more results on linear growth of $f$.
\begin{lma}\label{eqv cond for proper of f}
Let $(M^n,g,f)$ be an n dimensional complete non Ricci flat steady gradient Ricci soliton with scalar curvature $S\to 0$ as $r\to \infty$. Then the following are equivalent:\\
\\
$(i)$ $\displaystyle\overline{\lim_{r\to \infty}} r^{-1}f<1$ ;\\
$(ii)$ $\displaystyle\lim_{r\to \infty} f=-\infty$ ;\\
$(iii)$ $\forall$ $\a$ $\in(0,1)$, there exists constant $D>0$ such that
\begin{equation*}
r+D\geq -f\geq \a r-D \text{  on  } M,
\end{equation*}
in particular $\displaystyle\lim_{r\to \infty} r^{-1}f=-1$.
\end{lma}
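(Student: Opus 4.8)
The plan is to prove the cycle $(iii)\Rightarrow(ii)\Rightarrow(i)\Rightarrow(iii)$. The first two implications are immediate: if $-f=v\ge\a r-D$ for some $\a\in(0,1)$, then $v\to+\infty$ as $r\to\infty$ (the soliton is non-compact, so $r$ is unbounded), giving $(ii)$; and if $f\to-\infty$, then $f<0$ for all large $r$, so $\overline{\lim_{r\to\infty}}\,r^{-1}f\le 0<1$, giving $(i)$. All the work is in $(i)\Rightarrow(iii)$, and the upper half of $(iii)$ is free there: since $S\ge 0$, equation \eqref{eqn of naf} gives $|\na f|\le 1$, so $f$ is $1$-Lipschitz and $v(x)=-f(x)\le r(x)+|f(p_0)|$ on all of $M$. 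It remains to produce, for each $\a\in(0,1)$, a constant $D$ with $v\ge\a r-D$.

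For the lower bound I would integrate $v$ along the gradient flow of the unit field $\na f/|\na f|$. Since $S\to0$, fix $\e\in(0,1)$ (to be chosen) and $R_0$ with $S<\e<1$ on $\{r>R_0\}$; there $|\na f|^2=1-S>0$, so $\na f/|\na f|$ is a smooth unit field whose flow $\eta(s)$ is defined (unit speed) as long as it stays in $\{r>R_0\}$. Along $\eta$ one computes $\frac{d}{ds}v(\eta(s))=\la\na v,\na f/|\na f|\ra=-|\na f|=-\sqrt{1-S}$, so $v$ decreases at a rate between $\sqrt{1-\e}$ and $1$. For a point $x$ with $r(x)>R_0$ there are two possibilities: either the flow line $\eta$ from $x$ reaches $\partial B(p_0,R_0)$ at some finite time $T$, or it remains in $\{r>R_0\}$ for all $s\ge0$.

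The crux, and the only place where hypothesis $(i)$ is used, is ruling out the second, escaping, possibility. If $\eta$ stayed in $\{r>R_0\}$ forever, then $v(\eta(s))\le v(x)-\sqrt{1-\e}\,s$, equivalently $f(\eta(s))\ge f(x)+\sqrt{1-\e}\,s$, while unit speed gives $r(\eta(s))\le r(x)+s$. Hence for $s$ large
\[
\frac{f(\eta(s))}{r(\eta(s))}\ge\frac{f(x)+\sqrt{1-\e}\,s}{r(x)+s}\xrightarrow[s\to\infty]{}\sqrt{1-\e},
\]
and since $r(\eta(s))\to\infty$ this forces $\overline{\lim_{r\to\infty}}\,r^{-1}f\ge\sqrt{1-\e}$. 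Choosing $\e$ small enough that $\sqrt{1-\e}$ exceeds the value $\overline{\lim_{r\to\infty}}\,r^{-1}f<1$ supplied by $(i)$ yields a contradiction. Therefore every flow line from $\{r>R_0\}$ must return to $\partial B(p_0,R_0)$, and balls of larger radius inherit this property by continuity of $r\circ\eta$.

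Once escape is excluded, the lower bound follows from the first possibility. Flowing from $x$ to the first hitting time $T$, one has $S<\e$ along the way, so
\[
v(x)=v(\eta(T))+\int_0^T\sqrt{1-S}\,ds\ge \min_{\overline{B(p_0,R_0)}}v+\sqrt{1-\e}\,\big(r(x)-R_0\big),
\]
using $T\ge d(x,\eta(T))\ge r(x)-R_0$. Given $\a\in(0,1)$, taking $\e\le 1-\a^2$ (and $R_0$ correspondingly large, still with the no-escape property) gives $v\ge\a r-D$ for $r>R_0$, and enlarging $D$ absorbs the compact remainder. Together with the free upper bound this is $(iii)$, and $\a r-D\le v\le r+D$ yields $\lim_{r\to\infty}r^{-1}f=-1$ upon dividing by $r$ and letting $\a\uparrow1$. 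The main obstacle is thus entirely the escaping-flow-line dichotomy; once it is settled everything reduces to a one-dimensional integration of $\sqrt{1-S}$ played against the triangle inequality, and notably only $|\na f|\le1$, $S\to0$, and $(i)$ are needed.
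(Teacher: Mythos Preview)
Your proof is correct and follows the same overall strategy as the paper's: reduce to $(i)\Rightarrow(iii)$, integrate along the (normalized) gradient flow of $f$ outside a compact set, and rule out escaping trajectories using hypothesis $(i)$. The only substantive difference is the normalization of the flow. The paper uses $\psi_t$ generated by $\nabla f/|\nabla f|^2$ (so that $f\circ\psi_t=f+t$), and its no-escape argument passes through the estimate $d(\psi_t(q),q)\le t+\alpha^{-1}\int_0^t S(\psi_s(q))\,ds$ and an application of L'H\^opital's rule to conclude $\overline{\lim}\,r^{-1}f\ge 1$. Your unit-speed choice $\nabla f/|\nabla f|$ yields the cruder bound $\overline{\lim}\,r^{-1}f\ge\sqrt{1-\epsilon}$ directly from $f(\eta(s))\ge f(x)+\sqrt{1-\epsilon}\,s$ and $r(\eta(s))\le r(x)+s$, which already contradicts $(i)$ once $\epsilon$ is small, and so sidesteps the L'H\^opital step entirely. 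One small point you assert without justification is $r(\eta(s))\to\infty$ in the escaping case; this follows immediately from $f(\eta(s))\to+\infty$ together with the $1$-Lipschitz bound $f\le r+|f(p_0)|$ you already recorded.
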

\begin{remark} By (\ref{eqn of naf}), we always have $-1\leq\displaystyle\overline{\lim_{r\to \infty}} r^{-1}f\leq 1$.
\end{remark}
\begin{proof}$(iii) \Rightarrow (ii)$ and $(ii) \Rightarrow (i)$ are obvious.\\
$(i) \Rightarrow (iii):$ The upper bound of $-f$ follows from (\ref{eqn of naf}). We now consider the lower bound. Since the scalar curvature $S$ decays at infinity, for any $\a$ $\in$ $(0,1)$ there exists a compact subset $K$ of $M$ such that
$$|\na f|\geq \a \text{  on  } M\setminus K.$$
We consider the flow of the vector field $\frac{\na f}{|\na f|^2}$ and denote it by $\psi_t$, $\psi_0$ is the identity map. Let $q$ $\in M\setminus K$ and for small non-negative $t$,
\be\label{distance v.s. t}
d(\psi_t(q),q)\leq \int_0^t\frac{1}{|\na f|(\psi_s(q))}ds\leq \frac{t}{\a}.
\ee
By short time existence of ordinary differential equation, $\psi_t(q)$ exists as long as it is in $M\setminus K$. Therefore we can define $T$ as follows
\be
T:=\sup\{a,   \psi_t(q)\in M\setminus K \text{  for all  } t \in [0,a]\}.
\ee
Since $K$ is compact, $T$ is positive. We claim that $T$ is finite. We first assume the claim and prove the lemma. Under $T<\infty$, we know that $\psi_T(q)$ exists and lies in $K$, and by (\ref{distance v.s. t})
\begin{eqnarray*}
-f(q)&=&T-f(\psi_T(q))\\
&\geq& \a(d(q,p_0)-d(\psi_T(q),p_0))-f(\psi_T(q))\\
&\geq& \a  d(q,p_0)-\a \sup_K d(p_0, \cdot)-\sup_K |f|.
\end{eqnarray*}
The above inequality also holds for $q$ $\in$ $K$. Thus we showed $(iii)$ is true. It remains to justify the claim, i.e. $T<\infty$. We argue by contradiction, suppose $T=\infty$. By (\ref{distance v.s. t}) and (\ref{eqn of naf}), for all $t$ $\in$ $[0, \infty)$,
\begin{eqnarray*}
d(\psi_t(q),q)&\leq& \int_0^t\frac{1}{|\na f|(\psi_s(q))}ds\\
&=& t + \int_0^t\frac{1-|\na f|}{|\na f|}ds\\
&=& t + \int_0^t\frac{S}{|\na f|(1+|\na f|)}ds\\
&\leq& t+ \frac{1}{\a}\int_0^t S(\psi_s(q)) ds.
\end{eqnarray*}
We have for all $t\geq0$
\be \label{dist upper bdd}
d(\psi_t(q),q)\leq t+ \frac{1}{\a}\int_0^t S(\psi_s(q)) ds.
\ee
We shall write $d(q,p_0)$ as $r(q)$, $r(\psi_t(q))$ as $r_t$, where $p_0$ is a fixed reference point. By triangular inequality and (\ref{eqn of naf}),
\begin{eqnarray}\notag
f(\psi_t(q))-f(q)&=&t \\
\label{t< rt}&\leq& d(\psi_t(q),q)\\
\notag &\leq& r_t+r_0.
\end{eqnarray}
Hence by (\ref{dist upper bdd})
\begin{eqnarray*}
r_t-r_0&\leq& d(\psi_t(q),q)\\
&\leq& f(\psi_t(q))-f(q)+ \frac{1}{\a}\int_0^t S(\psi_s(q)) ds\\
&\leq& f(\psi_t(q))-f(q)+ \frac{1}{\a}\int_0^{r_t+r_0} S(\psi_s(q)) ds.
\end{eqnarray*}
We also used (\ref{t< rt}) in the last inequality. Dividing the inequality by $r_t$, we deduce that
\be\label{for contradiction}
1\leq \frac{f(\psi_t(q))}{r_t}+\frac{r_0-f(q)}{r_t}+\frac{1}{\a r_t}\int_0^{r_t+r_0} S(\psi_s(q)) ds.
\ee
Since $f(\psi_t(q))-f(q)=t$, we see that $r_t\to \infty$ as $t\to \infty$. Using L Hospital's Rule and $\displaystyle\lim_{r\to \infty} S=0$,
\begin{eqnarray*}
\lim_{t\to \infty}\frac{1}{r_t}\int_0^{r_t+r_0} S(\psi_s(q)) ds&=&\lim_{x\to \infty}\frac{1}{x}\int_0^{x+r_0} S(\psi_s(q)) ds\\
&=&\lim_{x\to \infty} S(\psi_{x+r_0}(q))\\
&=&0.
\end{eqnarray*}
Now taking $\limsup$ $t\to \infty$ in (\ref{for contradiction}), we have $1\leq \displaystyle\overline{\lim_{t\to \infty}} r_t^{-1}f(\psi_t(q))$ which contradicts with $(i)$. We conclude that $T<\infty$ and finish the proof.
\end{proof}
We next show that $f+r$ is bounded from above and below provided that $f$ is proper and scalar curvature $S$ decays sufficiently fast.
\begin{lma}\label{f-r is bdd}
Let $(M^n,g,f)$ be an n dimensional non Ricci flat complete steady gradient Ricci soliton. Suppose that $\displaystyle\lim_{r\to \infty} f=-\infty$ and for some positive constant $C$
$$S\leq Ce^f \text{  on  } M.$$
Then there exists positive constant $c_0$ such that
$$r-c_0\leq -f\leq r+c_0 \text{  on  } M.$$
\end{lma}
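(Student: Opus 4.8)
The plan is to control the distance function by the arc length of the integral curves of $\frac{\na f}{|\na f|^2}$, exactly in the spirit of Lemma \ref{eqv cond for proper of f}, but to upgrade the linear lower bound there (which only yields $-f\ge \a r-D$ with $\a<1$) to the sharp slope $1$ by cashing in the faster decay $S\le Ce^f$. First I would note that the hypotheses feed back into the earlier results: since $f\to-\infty$ we get $S\le Ce^f\to 0$ at infinity, so Lemma \ref{eqv cond for proper of f} is available and $v=-f$ is proper; moreover $v\to+\infty$ at infinity forces $f$ to attain its maximum, so $f\le f_{\max}$ globally for some constant $f_{\max}$. The upper bound $-f\le r+c_0$ is the easy half: integrating $\frac{d}{dt}v(\gamma(t))=\la\na v,\gamma'\ra\le|\na v|\le 1$ (using (\ref{eqn of nav})) along a minimizing geodesic $\gamma$ from $p_0$ to $x$ gives $v(x)\le v(p_0)+r(x)$.

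For the lower bound I would fix a compact $K$ on whose complement $S\le\frac12$, hence $|\na f|=\sqrt{1-S}\ge\frac{1}{\sqrt2}$, and for $q\in M\setminus K$ run the flow $\psi_t$ of $\frac{\na f}{|\na f|^2}$ as in (\ref{distance v.s. t}). Along it $f$ increases at unit speed, $f(\psi_t(q))=f(q)+t$, so the escape time $T:=\sup\{a:\psi_t(q)\in M\setminus K\ \forall t\in[0,a]\}$ must be finite (otherwise $f(\psi_t(q))\to\infty$, contradicting $f\le f_{\max}$), with $\psi_T(q)\in\partial K$. The core length estimate then reads
\[
d(q,\psi_T(q))\le \int_0^T\frac{dt}{\sqrt{1-S}}\le T+\sqrt2\int_0^T S(\psi_t(q))\,dt,
\]
where I use the elementary bound $\frac{1}{\sqrt{1-S}}-1\le\sqrt2\,S$ valid for $S\le\frac12$.

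The decisive point, and the step I expect to be the crux, is that the error integral is bounded by a constant independent of $q$. This is precisely where $S\le Ce^f$ does the work: since $f(\psi_t(q))=f(q)+t$,
\[
\int_0^T S(\psi_t(q))\,dt\le C\int_0^T e^{f(q)+t}\,dt\le Ce^{f(q)+T}=Ce^{f(\psi_T(q))}\le Ce^{f_{\max}} .
\]
(In Lemma \ref{eqv cond for proper of f} the analogous integral was only $o(r_t)$ under the mere hypothesis $S\to0$, which is what forced the loss of slope.) Combining this with $T=v(q)-v(\psi_T(q))$, $-v(\psi_T(q))=f(\psi_T(q))\le f_{\max}$, and $d(\psi_T(q),p_0)\le\sup_K d(\cdot,p_0)$ since $\psi_T(q)\in K$, I obtain
\[
r(q)\le d(q,\psi_T(q))+d(\psi_T(q),p_0)\le v(q)+f_{\max}+\sqrt2\,Ce^{f_{\max}}+\sup_K d(\cdot,p_0)=v(q)+c_0 .
\]
Thus $-f=v\ge r-c_0$ on $M\setminus K$, and since $r$ and $-f$ are both bounded on the compact set $K$, enlarging $c_0$ extends the inequality to all of $M$. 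The only mildly delicate issues are the standard verification that $\psi_t$ is well defined up to the landing time $T$ (which follows from completeness and the lower bound on $|\na f|$ on $M\setminus K$), so that the real content lies entirely in the uniform bound on $\int_0^T S\,dt$ above.
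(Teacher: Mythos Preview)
Your proposal is correct and follows essentially the same approach as the paper: both run the flow of $\dfrac{\na f}{|\na f|^2}$ from a point outside a compact set (the paper takes $K=\{f\ge t_0\}$, you take $K=\{S>\tfrac12\}$), use that $f$ increases at unit speed along the flow, and exploit $S\le Ce^{f}=Ce^{f(q)+t}$ to bound the error integral $\int_0^T S\,dt$ by a constant independent of $q$, yielding $d(q,\psi_T(q))\le T+\text{const}$ and hence $-f\ge r-c_0$. The only cosmetic differences are your use of the global maximum $f_{\max}$ in place of the level $t_0$, and the order in which you apply the elementary bound $\tfrac{1}{\sqrt{1-S}}-1\le\sqrt2\,S$ versus $S\le Ce^{f}$.
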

\begin{proof}The upper bound of $-f$ again follows from (\ref{eqn of naf}). For the lower bound, there is a small $t_0$ such that on $\{f\leq t_0\}$,
\be\label{bdd of cef}
Ce^f\leq \frac{1}{2} \text{  and  }
\ee
\begin{eqnarray*}
|\na f|^2&=&1-S\\
&\geq& 1-Ce^f\\
&\geq& \frac{1}{2}.
\end{eqnarray*}
Let $z$ $\in$ $\{f<t_0\}$ and $t:=f(z)$. We again denote the flow of the vector field $\frac{\na f}{|\na f|^2}$ by $\psi_s$, $\psi_0$ is the identity map. By  short time existence of ordinary differential equation, $\psi_s(z)$ exists as long as it is in $\{f\leq t_0\}$.
The following are true:
$$f(\psi_s(z))-f(z)=s \text{  and  }$$
\begin{eqnarray*}
d(\psi_{t_0-t}(z),z)&\leq& \int_0^{t_0-t}|\dot{\psi_{\tau}}(z)| d\tau\\
&=&\int_0^{t_0-t}\frac{1}{|\na f|}d\tau\\
&=&\int_0^{t_0-t}\frac{1}{\sqrt{1-S}}d\tau\\
&\leq&\int_0^{t_0-t}\frac{1}{\sqrt{1-Ce^{\tau+t}}}d\tau\\
&=& t_0-t +\int_0^{t_0-t}\frac{1-\sqrt{1-Ce^{\tau+t}}}{\sqrt{1-Ce^{\tau+t}}}d\tau\\
&=& f(\psi_{t_0-t}(z))-f(z)\\
&& + \int_0^{t_0-t}\frac{Ce^{\tau+t}}{(1+\sqrt{1-Ce^{\tau+t}})\sqrt{1-Ce^{\tau+t}}}d\tau\\
&\leq& f(\psi_{t_0-t}(z))-f(z) +\int_0^{t_0-t} \sqrt{2}Ce^{\tau+t}d\tau\\
&\leq& f(\psi_{t_0-t}(z))-f(z) +\sqrt{2}Ce^{t_0}.
\end{eqnarray*}
From the above inequality and $f(\psi_{t_0-t}(z))=t_0$, we have
\begin{eqnarray*}
-f(z)&\geq& r(z)-r(\psi_{t_0-t}(z))-f(\psi_{t_0-t}(z))-\sqrt{2}Ce^{t_0}\\
     &\geq& r(z)-\sup_K r(\cdot)-\sup_K |f|-\sqrt{2}Ce^{t_0},
\end{eqnarray*}
where $K:=\{f\geq t_0\}$ is a compact set. Result follows.
\end{proof}

\section{Proof of theorem \ref{decay estimate in dim n}}
Motivated by the study of certain weighted elliptic equation and the choice of barrier functions by Deruelle \cite{Deruelle-2017} and Munteanu-Sung-Wang \cite{MunteanuSungWang-2017}, we estimate the decay rate of subsolution $u$ of $\D u\geq -Au^2$, where $A\geq 0$ is a constant.
\begin{lma}\label{prelim estimate for u}
Let $(M^n,g,f)$ be an $n$ dimensional non Ricci flat complete steady gradient Ricci soliton. Suppose that $\displaystyle\lim_{r\to \infty} f=-\infty$ and $u$ is a non-negative subsolution of the following differential inequality
\be\label{diff ineq for u}
\D u\geq -Au^2,
\ee
where A is a non-negative constant. If in addition $\displaystyle\overline{\lim_{r\to \infty}} r u=: B<\infty$ and $AB<1$, then there exists positive constant $C$ such that
\be\label{rough estimate for u}
u\leq C(v^2+1)e^{-v} \text{  on  } M,
\ee
where $v:=-f$.
\end{lma}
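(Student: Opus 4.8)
The plan is to reduce the quadratic differential inequality \eqref{diff ineq for u} to a \emph{linear} one, to build an explicit supersolution that is a function of $v:=-f$ and decays like $(v^2+1)e^{-v}$, and then to run a maximum principle at infinity. Throughout I will use $\D v=1$ and $S+|\na v|^2=1$ from \eqref{eqn of v} and \eqref{eqn of nav}, together with $S\ge 0$; in particular $|\na v|\le 1$, so $v$ is $1$-Lipschitz and $v\le r+c_1$ for some constant $c_1$. The only computation I will need for testing barriers is that for any function $h$ of $v$ one has $\D h(v)=h''(v)(1-S)+h'(v)$.

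First I would linearize. Since $\overline{\lim}_{r\to\infty} ru=B$ with $AB<1$, fix $\epsilon>0$ so small that $A(B+\epsilon)<1$ and then pick $\mu$ with $A(B+\epsilon)<\mu<1$. For $r$ large the a priori bound gives $u\le (B+\epsilon)/r\le (B+\epsilon)/(v-c_1)$, so on a region $\{v\ge v_0\}$ (with $v_0$ large) we obtain $Au\le \mu/v$ and hence $Au^2=(Au)u\le \tfrac{\mu}{v}u$. The hypothesis \eqref{diff ineq for u} then upgrades to $\D u+\tfrac{\mu}{v}u\ge 0$ on $\{v\ge v_0\}$; write $L:=\D+\tfrac{\mu}{v}$. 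This is the essential use of $AB<1$: it is exactly what allows me to take $\mu<1$.

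Next I would produce the barrier. Using $\D h(v)=h''(1-S)+h'$ and $S\ge 0$, a direct computation gives, for $W:=C(v^2+1)e^{-v}$,
$$LW=Ce^{-v}\Big((\mu-2)v+2+\tfrac{\mu}{v}-S(v^2-4v+3)\Big),$$
which is $<0$ for $v\ge v_0$ (enlarging $v_0$), since $\mu<2$ and $-S(v^2-4v+3)\le 0$ once $v$ is large; thus $W$ is a supersolution of $L$ there. On the compact set $\{v\le v_0\}=\{f\ge -v_0\}$ (compact because $f\to-\infty$) the function $u$ is bounded and $(v^2+1)e^{-v}$ is bounded below by a positive constant, so after increasing $C$ I may assume $u\le W$ on $\{v\le v_0\}$, in particular on the inner boundary $\{v=v_0\}$.

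The main obstacle is the maximum principle itself: the zeroth-order coefficient $\mu/v$ of $L$ is \emph{positive}, the wrong sign for naive comparison, so at a would-be positive interior maximum of $\eta:=u-W$ the term $\tfrac{\mu}{v}\eta$ enters with the wrong sign and no contradiction appears. I would resolve this with an auxiliary strict supersolution of $L$ that decays more slowly than the a priori rate $1/v$. Choosing $\beta$ with $\mu<\beta<1$ (possible precisely because $\mu<1$), the function $\phi:=v^{-\beta}$ satisfies $L\phi=(\mu-\beta)v^{-\beta-1}+\beta(\beta+1)(1-S)v^{-\beta-2}<0$ on $\{v\ge v_0\}$. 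Set $\psi:=\eta/\phi$. From $Lu\ge 0$ and $LW\le 0$ one has $L\eta\ge 0$, and the identity $L\eta=\phi\,\D\psi+2\la\na\psi,\na\phi\ra+\psi\,L\phi$ shows that at any interior point where $\psi$ has a positive local maximum the right-hand side would be strictly negative (using $\na\psi=0$, $\D\psi\le 0$, $\psi>0$, $L\phi<0$), a contradiction; hence $\psi$ has no positive interior maximum. Finally, on $\{v\ge v_0\}$ wherever $\eta>0$ we have $0<\eta\le u\le (B+\epsilon)/(v-c_1)$, so $\psi\le (B+\epsilon)\,v^{\beta}/(v-c_1)\to 0$ as $v\to\infty$ because $\beta<1$; together with $\psi\le 0$ on $\{v=v_0\}$ this forces $\sup\psi\le 0$, i.e. $u\le W=C(v^2+1)e^{-v}$ on $\{v\ge v_0\}$. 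Combined with the bound on $\{v\le v_0\}$ this yields \eqref{rough estimate for u}. The delicate points are the slow-decay choice $\mu<\beta<1$ and the verification that $\psi\to 0$ at infinity on $\{\eta>0\}$, which together supply the comparison at infinity that the positive zeroth-order term otherwise destroys.
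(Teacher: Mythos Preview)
Your argument is correct. The route you take is close in spirit to the paper's proof but is packaged differently, and it is worth recording the comparison.

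The paper does not first linearize \eqref{diff ineq for u}. Instead it conjugates by a power of $v$: for $\alpha\in(AB,1)$ it shows that $v^\alpha u$ is a subsolution and $v^{3\alpha}e^{-v}$ a supersolution for the drifted operator $\Delta_{f+2\alpha\ln v}$ (which has \emph{no} zeroth-order term), and then applies the usual maximum principle to $Q=v^\alpha u - b\,v^{3\alpha}e^{-v}$, using $\alpha<1$ to see that $Q\to 0$ at infinity. Your scheme is the equivalent ``ratio'' version of the same idea: dividing by $\phi=v^{-\beta}$ is, up to a zeroth-order correction, the same as conjugating by $v^\beta$, and your condition $\mu<\beta<1$ plays exactly the role of the paper's $AB<\alpha<1$ in forcing the comparison quantity to vanish at infinity. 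What your write-up buys is an explicit separation of the two ideas (linearize, then compare), together with a clean explanation of why the naive maximum principle fails for $L=\D+\tfrac{\mu}{v}$ and how the auxiliary strict supersolution $\phi$ restores it; the paper's conjugation hides this issue by eliminating the zeroth-order term from the outset. Both proofs use the hypothesis $AB<1$ in precisely the same way, and both yield the same bound $u\le C(v^2+1)e^{-v}$.
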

\begin{proof} Since $v=-f\to\infty$, $v>0$ near infinity. Pick any $\a$ $\in$ $(AB, 1)$, we will show that $v^{\a}u$ and $v^{3\a}e^{-v}$ are subharmonic and superharmonic functions respectively with respect to the operator $\Delta_{f+2\a\ln v}$, then we will apply the maximum principle to $v^{\a}u-bv^{3\a}e^{-v}$ for some well chosen constant $b$. For $v^{\a}u$ , we compute directly using (\ref{eqn of v}) and (\ref{diff ineq for u})
\begin{eqnarray*}
\D (v^{\a}u)&=& v^{\a}\D u + 2\langle \na v^{\a}, \na (v^{\a}uv^{-\a})\rangle+ u\D v^{\a}\\
&=&v^{\a}\D u +2\a\langle \na \ln v, \na (v^{\a}u)\rangle-2\a^2|\na v|^2v^{\a-2}u+ u\D v^{\a}\\
&\geq&-Av^{\a}u^2+2\a\langle \na \ln v, \na (v^{\a}u)\rangle-2\a^2|\na v|^2v^{\a-2}u\\
&&+\a v^{\a-1}u+\a(\a-1)|\na v|^2v^{\a-2}u\\
&=&\a v^{\a-1}u-Av^{\a}u^2-\a(\a+1)v^{\a-2}|\na v|^2u\\
&&+2\a\langle \na \ln v, \na (v^{\a}u)\rangle.
\end{eqnarray*}
Hence we get the following inequality
\begin{eqnarray*}
\Delta_{f+2\a\ln v} (v^{\a}u)&\geq& v^{\a-1}u\big[ \a-A uv-\a(\a+1)\frac{|\na v|^2}{v}\big]\\
&=&v^{\a-1}u\big[ \a-AB+A(B-uv)-\a(\a+1)\frac{|\na v|^2}{v}\big].
\end{eqnarray*}
By (\ref{eqn of naf}) and (\ref{eqn of nav}), $|\na f|=|\na v|\leq 1$, we see that $v\leq r+D$ for some constant $D$ and thus
$$\overline{\lim_{r\to \infty}} vu\leq \overline{\lim_{r\to \infty}} ru=B.$$

Using $\lim_{r\to \infty} v=\infty$, we have
\begin{eqnarray*}
\Delta_{f+2\a\ln v} (v^{\a}u)&\geq& v^{\a-1}u\frac{(\a-AB)}{4}\\
&\geq& 0
\end{eqnarray*}
outside some compact subset of $M$. For the function $v^{3\a}e^{-v}$, by (\ref{eqn of nav})
\begin{eqnarray}\notag
\D e^{-v}&=&-e^{-v}\D v+e^{-v}|\na v|^2\\
\label{eqn for e-v}
&=&e^{-v}(|\na v|^2-1)\\
\notag
&=&-Se^{-v}.
\end{eqnarray}
Direct computation also yields
\begin{eqnarray*}
\D v^{3\a}&=&3\a v^{3\a-1}\D v+3\a(3\a-1)v^{3\a-2}|\na v|^2\\
&=&3\a v^{3\a-1}+3\a(3\a-1)v^{3\a-2}|\na v|^2.
\end{eqnarray*}
Using (\ref{eqn of nav}) and $\displaystyle\lim_{r\to \infty} v=\infty$,
\begin{eqnarray*}
\D (v^{3\a}e^{-v})&=&v^{3\a}\D e^{-v} + e^{-v}\D v^{3\a}+2\langle \na v^{3\a}, \na e^{-v}\rangle\\
&=&-Sv^{3\a}e^{-v}+3\a(3\a-1)v^{3\a-2}e^{-v}|\na v|^2\\
&&+3\a v^{3\a-1}e^{-v}-6\a v^{3\a-1}e^{-v}|\na v|^2\\
&=&-Sv^{3\a}e^{-v}+3\a(3\a-1)v^{3\a-2}e^{-v}|\na v|^2\\
&&-3\a v^{3\a-1}e^{-v}+6\a v^{3\a-1}e^{-v}S\\
&=&-3\a v^{3\a-1}e^{-v}+3\a(3\a-1)v^{3\a-2}e^{-v}|\na v|^2\\
&&+Sv^{3\a}e^{-v}(-1+6\a v^{-1})\\
&=&v^{3\a-1}e^{-v}\Big[-3\a+3\a(3\a-1)\frac{|\na v|^2}{v}\Big]\\
&&+Sv^{3\a}e^{-v}(-1+6\a v^{-1})\\
&=&v^{3\a-1}e^{-v}(-3\a+ o(1))+Sv^{3\a}e^{-v}(-1+o(1))\\
&\leq&v^{3\a-1}e^{-v}(-3\a+ o(1)).
\end{eqnarray*}
\begin{eqnarray*}
-2\a\langle \na \ln v, \na (v^{3\a}e^{-v})\rangle &=&-6\a ^2v^{3\a-2}e^{-v}|\na v|^2 + 2\a v^{3\a-1}e^{-v}|\na v|^2\\
&=&-6\a ^2v^{3\a-2}e^{-v}|\na v|^2+ 2\a v^{3\a-1}e^{-v}\\
&&- 2\a v^{3\a-1}e^{-v}S\\
&\leq& 2\a v^{3\a-1}e^{-v}.
\end{eqnarray*}
We see that near infinity
\begin{eqnarray*}
\Delta_{f+2\a\ln v}(v^{3\a}e^{-v})&\leq&v^{3\a-1}e^{-v}(-\a+ o(1))\\
&<& 0.
\end{eqnarray*}
Hence $v^{3\a}e^{-v}$ is superharmonic with respect to $\Delta_{f+2\a\ln v}$. To proceed, we choose a large $R_0$ such that on $M\setminus B_{R_0}(p_0)$, $v>1$,
\be\label{diff ineq for vau}
\Delta_{f+2\a\ln v}(v^{\a}u)\geq 0 \text{  and  }
\ee
\be\label{diff ineq for v3ae-v}
\Delta_{f+2\a\ln v}(v^{3\a}e^{-v})<0.
\ee

Pick a large $b>0$ such that on $\partial B_{R_0}(p_0)$
\be
v^{\a}u-bv^{3\a}e^{-v}<0.
\ee
Let $Q:=v^{\a}u-bv^{3\a}e^{-v}$. Since $\a<1$ and $\displaystyle\overline{\lim_{r\to\infty}} vu\leq B$, we have $\displaystyle\lim_{r\to \infty}Q=0$. Moreover by (\ref{diff ineq for vau}) and (\ref{diff ineq for v3ae-v}), on $M\setminus B_{R_0}(p_0)$
$$\Delta_{f+2\a\ln v}Q>0.$$
By the maximum principle, we know that $Q\leq 0$ on $M\setminus B_{R_0}(p_0)$. Result follows.
\end{proof}
With the preliminary estimate in Lemma \ref{prelim estimate for u}, we are able to remove the quadratic factor $v^2$ in (\ref{rough estimate for u}) and thus improve the decay estimate of $u$.
\begin{prop}\label{sharp estimate for u}
Let $(M^n,g,f)$ be an $n$ dimensional non Ricci flat complete steady gradient Ricci soliton. Suppose that $\displaystyle\lim_{r\to \infty} f=-\infty$ and $u$ is a non-negative subsolution of the following differential inequality
\be\label{diff ineq for u ver 2}
\D u\geq -Au^2,
\ee
where A is a non-negative constant. If in addition $\displaystyle\overline{\lim_{r\to \infty}} r u=: B<\infty$ and $AB<1$, then u satisfies
$$u \leq Ce^{-v} \text{  on  }  M,$$
for some positive constant $C$, where $v:=-f$.
\end{prop}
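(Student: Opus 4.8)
The plan is to bootstrap from Lemma \ref{prelim estimate for u}. That lemma already gives $u\le C(v^2+1)e^{-v}$, so near infinity $u^2\le C'v^4e^{-2v}$ for some constant $C'$. The crucial point is that, thanks to this preliminary bound, the quadratic forcing term in (\ref{diff ineq for u ver 2}) is of strictly higher exponential order than the target barrier $e^{-v}$: it is $O(v^4e^{-2v})$. Hence it should be absorbable by a small correction to the barrier, and the remaining linear comparison is governed by the fact that $e^{-v}$ is genuinely superharmonic for $\D$.

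First I would record the two barrier computations. From (\ref{eqn for e-v}) we have $\D e^{-v}=-Se^{-v}\le 0$, since $S\ge 0$. For the correction I would use $v^4e^{-2v}$ and compute, with the help of (\ref{eqn of v}) and (\ref{eqn of nav}), that $\D(v^4e^{-2v})=e^{-2v}\big(2v^4+o(v^4)\big)$ near infinity; the leading coefficient $2$ is positive because the dominant contribution comes from $\D e^{-2v}=e^{-2v}(2-4S)$. I would then set the barrier $\psi:=be^{-v}-cv^4e^{-2v}$ and compute $\D\psi=-bSe^{-v}-c\,\D(v^4e^{-2v})$. Choosing $c>AC'$, the nonpositive term $-bSe^{-v}$ only helps, and for large $v$ one obtains $\D\psi\le -cv^4e^{-2v}<-AC'v^4e^{-2v}\le -Au^2\le \D u$, using $u^2\le C'v^4e^{-2v}$ from the preliminary estimate; in particular $\D(u-\psi)>0$ on the end.

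Finally I would run the maximum principle exactly as in Lemma \ref{prelim estimate for u}. Fix $R_0$ large so that $v>1$ and $\D(u-\psi)>0$ hold on $M\setminus B_{R_0}(p_0)$, then choose $b$ large so that $u-\psi<0$ on $\partial B_{R_0}(p_0)$, which is possible since $v$ is bounded there and $\psi\approx be^{-v}$ dominates $u$. Both $u$ and $\psi$ tend to $0$ at infinity, so $u-\psi\to 0$; as $\D(u-\psi)>0$ on the end, the maximum principle forces $u-\psi\le 0$ on $M\setminus B_{R_0}(p_0)$, whence $u\le\psi\le be^{-v}$ there, and enlarging the constant to cover the compact piece $B_{R_0}(p_0)$ gives $u\le Ce^{-v}$ on $M$.

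The main obstacle is the quadratic nonlinearity $-Au^2$: a direct comparison of $u$ with $be^{-v}$ fails, because $\D(be^{-v})=-bSe^{-v}$ need not dominate $-Au^2$ when $S$ decays faster than $u^2e^{v}$. The resolution is precisely the bootstrap: the preliminary estimate makes $-Au^2$ a higher-order quantity $O(v^4e^{-2v})$, which is annihilated to leading order by the correction $-cv^4e^{-2v}$, whose weighted Laplacian has the matching magnitude $\sim 2cv^4e^{-2v}$. Getting the sign and size of $\D(v^4e^{-2v})$ right, and checking that the correction stays $o(e^{-v})$ so as not to spoil the final bound, is the one computation that must be carried out with care.
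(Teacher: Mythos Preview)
Your approach is correct and takes a genuinely different route from the paper's. Both proofs bootstrap from Lemma~\ref{prelim estimate for u}, but the paper passes to the modified operator $\Delta_{f-2v^{-1}}$: it shows that $ue^{-v^{-1}}$ is $\Delta_{f-2v^{-1}}$-subharmonic (the extra positive term $v^{-2}ue^{-v^{-1}}$ absorbs $-Au^2e^{-v^{-1}}$ because the preliminary estimate gives $v^2u\to 0$) and that $e^{-v}e^{-4v^{-1}}$ is $\Delta_{f-2v^{-1}}$-superharmonic, and then compares the two. You instead keep the plain operator $\D$ and build an \emph{additive} correction into the barrier, taking $\psi=be^{-v}-cv^4e^{-2v}$ and checking $\D(u-\psi)>0$ directly. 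Your version is more transparent---the correction $cv^4e^{-2v}$ visibly matches the size of the forcing $Au^2\le AC'v^4e^{-2v}$---while the paper's multiplicative weights $e^{-v^{-1}}$, $e^{-4v^{-1}}$ are bounded between positive constants, so the final step $u\le be^{-v}e^{-3v^{-1}}\le be^{-v}$ requires no check that the correction is $o(e^{-v})$.

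One technical point deserves care. Your claim $\D(v^4e^{-2v})=e^{-2v}\big(2v^4+o(v^4)\big)$ uses $S\to 0$: the actual leading coefficient is $2-4S$, as you yourself note in writing $\D e^{-2v}=e^{-2v}(2-4S)$, and Proposition~\ref{sharp estimate for u} does \emph{not} assume $S\to 0$. The paper's barrier sidesteps this because every $S$-term there enters with a favorable sign. Your argument is easily repaired by splitting into cases: where $S\le\tfrac14$ one has $2-4S\ge 1$ and the computation goes through as written; where $S>\tfrac14$ the term $-bSe^{-v}\le -\tfrac{b}{4}e^{-v}$ in $\D\psi$ already dominates everything of order $v^4e^{-2v}$ for $v$ large (uniformly in $b\ge 1$, say). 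With this adjustment one still obtains $\D\psi\le -AC'v^4e^{-2v}\le -Au^2$ outside a compact set independent of $b$, and the maximum principle closes exactly as you describe.
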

\begin{remark}
By considering $v+\sigma$ for some large $\sigma$, one can show by similar arguments that both Lemma \ref{prelim estimate for u} and Proposition \ref{sharp estimate for u} are still true if the condition $\lim_{r\to\infty}f=-\infty$ is replaced by the boundedness of $f$ from above by a constant.
\end{remark}
\begin{remark} The condition $AB<1$ may look artificial but is critical indeed. In three dimensional Bryant steady soliton, $S^2\geq 2|\Ric|^2$ since the sectional curvature is non-negative (see \cite{MunteanuSungWang-2017}). Hence the scalar curvature $S\geq 0$ satisfies
$$\D S\geq -S^2.$$
However, $\displaystyle\lim_{r\to \infty} rS=\lim_{r\to \infty} -r^{-1}f=1$ (see \cite{Brendle-2013}). $AB=1$ and $S$ doesn't decay exponentially.
\end{remark}
For non-trivial gradient steady soliton with $\Ric\geq 0$ and $\lim_{r\to\infty} S=0$, we know that $S^2\geq|\Ric|^2$ and $\lim_{r\to\infty} f=-\infty$ (see \cite{CarrilloNi-2009}). We have an immediate consequence of Proposition \ref{sharp estimate for u} and Lemma \ref{f-r is bdd}:
\begin{cor} Let $(M^n,g,f)$ be an n dimensional complete non Ricci flat gradient steady Ricci soliton with $\Ric\geq 0$. Suppose that $\displaystyle\overline{\lim_{r\to\infty}} rS< \frac{1}{2}$, then there exists a constant $C>0$ such that
$$S\leq Ce^{-r} \text{  on  } M.$$
\end{cor}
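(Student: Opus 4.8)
The plan is to recognize this corollary as a direct application of Proposition \ref{sharp estimate for u} to the scalar curvature $S$, followed by a bootstrapping step that uses Lemma \ref{f-r is bdd} to convert decay in the potential into decay in the distance. First I would observe that the hypothesis $\overline{\lim_{r\to\infty}} rS < \frac{1}{2}$ forces $\lim_{r\to\infty} S = 0$, since $rS$ being eventually bounded gives $S \leq C/r \to 0$. Combined with $\Ric \geq 0$, the facts quoted just before the statement (drawn from \cite{CarrilloNi-2009}) then supply both the properness $\lim_{r\to\infty} f = -\infty$ and the pointwise inequality $S^2 \geq |\Ric|^2$. The latter is in any case elementary for nonnegative Ricci curvature: writing the Ricci eigenvalues as $\lambda_1,\dots,\lambda_n \geq 0$, one has $|\Ric|^2 = \sum \lambda_i^2 \leq \big(\sum \lambda_i\big)^2 = S^2$ because every cross term is nonnegative.

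Next I would verify that $u := S$ satisfies the hypotheses of Proposition \ref{sharp estimate for u}. From the soliton identity \eqref{eqn of S}, namely $\D S = -2|\Ric|^2$, together with $|\Ric|^2 \leq S^2$, we obtain $\D S \geq -2 S^2$, so $S$ is a nonnegative subsolution of \eqref{diff ineq for u ver 2} with $A = 2$. Setting $B := \overline{\lim_{r\to\infty}} rS$, the assumption $B < \frac{1}{2}$ gives precisely $AB = 2B < 1$, while the properness of $f$ secured above supplies the remaining hypothesis. Applying Proposition \ref{sharp estimate for u} therefore yields a constant $C > 0$ with $S \leq C e^{-v}$ on $M$, where $v = -f$ as in \eqref{lower bdd of v}.

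Finally I would upgrade this to decay in $r$. The bound $S \leq C e^{-v} = C e^{f}$ just obtained is exactly the hypothesis $S \leq C e^{f}$ required by Lemma \ref{f-r is bdd}; since we also have $\lim_{r\to\infty} f = -\infty$, that lemma produces a constant $c_0$ with $r - c_0 \leq -f = v$. Hence $e^{-v} \leq e^{c_0} e^{-r}$, and combining with $S \leq C e^{-v}$ gives $S \leq C e^{c_0} e^{-r}$, which is the claimed estimate.

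The substantive content is contained entirely in Proposition \ref{sharp estimate for u}, whose maximum-principle argument with the weighted operator $\Delta_{f + 2\alpha \ln v}$ does the real work; the only genuine subtlety in the corollary itself is its two-stage structure, in which one first derives exponential decay relative to the potential $v$ and then feeds that very bound back into Lemma \ref{f-r is bdd} to pin down the comparison $v \asymp r$. I expect no serious obstacle beyond checking that $AB = 2B$ remains strictly below $1$, which is guaranteed sharply by the threshold $\frac{1}{2}$ in the hypothesis.
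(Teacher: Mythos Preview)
Your proposal is correct and follows exactly the paper's approach: the paper states the corollary as ``an immediate consequence of Proposition \ref{sharp estimate for u} and Lemma \ref{f-r is bdd}'' after noting that $\Ric\geq 0$ with $S\to 0$ gives $S^2\geq |\Ric|^2$ and $f\to -\infty$, which is precisely the chain you describe. The only minor imprecision is in your closing remark: the weighted operator in Proposition \ref{sharp estimate for u} itself is $\Delta_{f-2v^{-1}}$, while $\Delta_{f+2\alpha\ln v}$ appears in the preliminary Lemma \ref{prelim estimate for u}; this does not affect the argument.
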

\begin{proof}[Proof of Proposition \ref{sharp estimate for u}]
Firstly, we prove that $u e^{-v^{-1}}$ is subharmonic with respect to the operator $\Delta_{f-2v^{-1}}$ near infinity.
\begin{eqnarray*}
\D e^{-v^{-1}}&=&v^{-2}e^{-v^{-1}}\D v+ |\na v|^2\Big(-2v^{-3}e^{-v^{-1}}+ v^{-4}e^{-v^{-1}}\Big)\\
&=&v^{-2}e^{-v^{-1}}+ e^{-v^{-1}}|\na v|^2\Big(-2v^{-3}+ v^{-4}\Big).
\end{eqnarray*}
\begin{eqnarray*}
\D (u e^{-v^{-1}})&=& e^{-v^{-1}}\D u+2\langle \na u, \na e^{-v^{-1}}\rangle+ u\D e^{-v^{-1}}\\
&=& e^{-v^{-1}}\D u+ 2v^{-2}e^{-v^{-1}}\langle \na (e^{-v^{-1}}u e^{v^{-1}}), \na v\rangle\\
&&+ u\D e^{-v^{-1}} \\
&=& e^{-v^{-1}}\D u + 2v^{-2}\langle \na (e^{-v^{-1}}u ), \na v\rangle\\
&&-2v^{-4}u e^{-v^{-1}}|\na v|^2+ u\D e^{-v^{-1}}\\
&\geq& -Ae^{-v^{-1}}u^2-ue^{-v^{-1}}|\na v|^2\Big(2v^{-3}+ v^{-4}\Big)\\
&&+v^{-2}ue^{-v^{-1}}+ 2v^{-2}\langle \na (e^{-v^{-1}}u ), \na v\rangle.
\end{eqnarray*}
Hence
\be\label{diff ineq for ue-v-1}
\Delta_{f-2v^{-1}} (u e^{-v^{-1}})\geq v^{-2}ue^{-v^{-1}}\Big[1-Av^2u-|\na v|^2\big(2v^{-1}+ v^{-2}\big)\Big].
\ee
By Lemma \ref{prelim estimate for u}, $v^2u\to 0$ as $r\to \infty$. Therefore it is not difficult to see that $\Delta_{f-2v^{-1}} (u e^{-v^{-1}})\geq 0$ outside a compact subset of $M$.\\
\\
Secondly, we want to show that $e^{-v}e^{-4v^{-1}}$ is superharmonic with respect to the operator $\Delta_{f-2v^{-1}}$ near infinity. Using (\ref{eqn for e-v}) in Lemma \ref{prelim estimate for u}
$$\D e^{-v}=-Se^{-v}.$$

\begin{eqnarray*}
\D e^{-4v^{-1}}&=&4v^{-2}e^{-4v^{-1}}\D v +8|\na v|^2e^{-4v^{-1}}\big(-v^{-3}+2v^{-4}\big)\\
&=&4v^{-2}e^{-4v^{-1}}+8|\na v|^2e^{-4v^{-1}}\big(-v^{-3}+2v^{-4}\big).
\end{eqnarray*}
By $\lim_{r\to \infty} v=\infty$, outside a compact subset of $M$,
\begin{eqnarray*}
\D (e^{-v}e^{-4v^{-1}})&=& e^{-4v^{-1}}\D e^{-v}+ e^{-v}\D e^{-4v^{-1}}+2\langle \na e^{-v}, \na e^{-4v^{-1}}\rangle\\
&=& e^{-4v^{-1}}\D e^{-v}+ e^{-v}\D e^{-4v^{-1}}-8 e^{-v}e^{-4v^{-1}}v^{-2}|\na v|^2\\
&=& e^{-4v^{-1}}\D e^{-v}+ \Big[e^{-v}\D e^{-4v^{-1}}-8 e^{-v}e^{-4v^{-1}}v^{-2}\Big]\\
&&+8 e^{-v}e^{-4v^{-1}}v^{-2}S\\
&=& e^{-4v^{-1}}\D e^{-v}+8 e^{-v}e^{-4v^{-1}}v^{-2}S-4e^{-v}e^{-4v^{-1}}v^{-2}\\
&&+8|\na v|^2e^{-v}e^{-4v^{-1}}\big(-v^{-3}+2v^{-4}\big)\\
&=&-Se^{-v}e^{-4v^{-1}}+8 e^{-v}e^{-4v^{-1}}v^{-2}S- 4e^{-v}e^{-4v^{-1}}v^{-2}\\
&&+8|\na v|^2e^{-v}e^{-4v^{-1}}\big(-v^{-3}+2v^{-4}\big)\\
&=&e^{-v}e^{-4v^{-1}}v^{-2}\Big[- 4+8|\na v|^2\big(-v^{-1}+2v^{-2}\big)\Big]\\
&&+Se^{-v}e^{-4v^{-1}}(-1+8v^{-2})\\
&\leq&e^{-v}e^{-4v^{-1}}v^{-2}(-4+o(1)).
\end{eqnarray*}

\begin{eqnarray*}
-\langle \na (-2v^{-1}), \na (e^{-v}e^{-4v^{-1}})\rangle &=&-2v^{-2}\langle \na v, \na (e^{-v}e^{-4v^{-1}})\rangle \\
&=&2e^{-v}e^{-4v^{-1}}v^{-2}|\na v|^2\\
&&-8e^{-v}e^{-4v^{-1}}v^{-4}|\na v|^2\\
&\leq& 2e^{-v}e^{-4v^{-1}}v^{-2}.
\end{eqnarray*}

\begin{eqnarray*}
\Delta_{f-2v^{-1}}(e^{-v}e^{-4v^{-1}})&=&\D (e^{-v}e^{-4v^{-1}})-\langle \na (-2v^{-1}), \na (e^{-v}e^{-4v^{-1}})\rangle\\
&\leq&e^{-v}e^{-4v^{-1}}v^{-2}(-2+o(1))\\
&<&0.
\end{eqnarray*}
We showed that $e^{-v}e^{-4v^{-1}}$ is superharmonic with respect to $\Delta_{f-2v^{-1}}$ near infinity.\\
\\
To finish the proof, we choose a large $R_0$ such that on $M\setminus B_{R_0}(p_0)$, $v>1$,
\be\label{diff ineq for ue-v-1 ver2}
\Delta_{f-2v^{-1}} (u e^{-v^{-1}})\geq 0 \text{  and  }
\ee
\be\label{diff ineq for e-ve-4v-1 ver 2}
\Delta_{f-2v^{-1}}(e^{-v}e^{-4v^{-1}})<0.
\ee
Let $b>0$ be a large number such that on $\partial B_{R_0}(p_0)$
\be
u e^{-v^{-1}}-be^{-v}e^{-4v^{-1}}<0.
\ee
Let $Q:=u e^{-v^{-1}}-be^{-v}e^{-4v^{-1}}$. Obviously, we have $\displaystyle\lim_{r\to \infty}Q=0$. Moreover by (\ref{diff ineq for ue-v-1 ver2}) and (\ref{diff ineq for e-ve-4v-1 ver 2}), on $M\setminus B_{R_0}(p_0)$
$$\Delta_{f-2v^{-1}}Q>0.$$
By the maximum principle, $Q\leq 0$ on $M\setminus B_{R_0}(p_0)$ which implies that
$$u\leq be^{-v}e^{-3v^{-1}}\leq be^{-v}.$$
Choosing a larger $b$ if necessary, the above inequality holds globally on $M$.
\end{proof}

We are about to prove the main theorem. For the convenience of reader, we repeat the main theorem here:

\begin{thm*} Let $(M,g,f)$ be an $n$ dimensional non Ricci flat complete steady gradient Ricci soliton with $n\geq 2$. Suppose the potential function $f$ is bounded from above by a constant and $\displaystyle\overline{\lim_{r\to \infty}} r|Rm|<\frac{1}{5}$. Then there exists a positive constant $C$ such that
\be
|Rm|(x)\leq Ce^{-r(x)} \text{  on  } M.
\ee
\end{thm*}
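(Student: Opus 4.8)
The plan is to apply Proposition \ref{sharp estimate for u} to the function $u=|Rm|$ and then to upgrade the resulting bound $u\le Ce^{-v}$ into the sharp bound $u\le Ce^{-r}$ by a bootstrap through Lemma \ref{f-r is bdd}. The whole argument rests on verifying the three hypotheses of Proposition \ref{sharp estimate for u} for $u=|Rm|$: that $\lim_{r\to\infty}f=-\infty$, that $|Rm|$ is a non-negative subsolution of $\D u\ge -Au^2$ for an explicit constant $A$, and that $AB<1$, where $B=\overline{\lim}_{r\to\infty}r|Rm|$.

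The analytic heart of the proof is the differential inequality for $|Rm|$. Since $(M,g,f)$ is a steady soliton, the family $g(t)=\phi_t^*g$ (with $\phi_t$ the flow of $\na f$) solves the Ricci flow, and since curvature is natural, $|Rm|^2_{g(t)}(x)=|Rm|^2_g(\phi_t(x))$; differentiating at $t=0$ turns the time derivative into $\na_{\na f}|Rm|^2$. Combined with Hamilton's evolution equation $\partial_t|Rm|^2=\Delta|Rm|^2-2|\na Rm|^2+2\la Q(Rm),Rm\ra$ (the analogue for the full curvature tensor of \eqref{eqn of Rc}), this yields the elliptic identity
\be
\D|Rm|^2=\Delta|Rm|^2-\na_{\na f}|Rm|^2=2|\na Rm|^2-2\la Q(Rm),Rm\ra,
\ee
where $Q(Rm)$ is the universal quadratic reaction term. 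Using the pointwise bound $\la Q(Rm),Rm\ra\le A|Rm|^3$ with a universal constant $A$ (independent of $n$, being a fixed contraction of curvature tensors), Kato's inequality $|\na|Rm||^2\le|\na Rm|^2$, and the identity $\D|Rm|^2=2|Rm|\,\D|Rm|+2|\na|Rm||^2$, I obtain
\be
\D|Rm|\ge -A|Rm|^2.
\ee
Because the soliton is non Ricci flat we have $S>0$, hence $|Rm|\ge c_n^{-1}S>0$ everywhere, so $|Rm|$ is smooth and no viscosity subtleties arise. Pinning down the sharp value $A=5$ of this universal constant is the main obstacle, and it is precisely what forces the threshold $\tfrac15$: when $B<\tfrac15$ we get $AB=5B<1$.

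With the inequality established, I would check the remaining hypotheses. The curvature bound forces $|Rm|\to 0$, hence $S\to 0$ at infinity; together with $f$ being bounded above, condition $(i)$ of Lemma \ref{eqv cond for proper of f} holds (since $r^{-1}f\le C_0 r^{-1}\to 0<1$), so that lemma gives $\lim_{r\to\infty}f=-\infty$. Thus all hypotheses of Proposition \ref{sharp estimate for u} hold with $u=|Rm|$ and $A=5$, and it yields
\be
|Rm|\le Ce^{-v}=Ce^{f}\quad\text{on }M,\qquad v:=-f.
\ee

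Finally I would bootstrap to replace $v$ by $r$. From $S\le c_n|Rm|\le Ce^{f}$ and $\lim_{r\to\infty}f=-\infty$, Lemma \ref{f-r is bdd} applies and gives $v=-f\ge r-c_0$ for some constant $c_0$. Substituting into the previous bound produces
\be
|Rm|\le Ce^{-v}\le Ce^{c_0}\,e^{-r},
\ee
which is the claimed estimate. The only genuinely delicate point is the derivation of the reaction-term constant $A=5$; everything else is a matter of assembling the lemmas of Sections 3 and 4.
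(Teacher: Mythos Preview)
Your proposal is correct and follows essentially the same route as the paper: verify $\lim_{r\to\infty}f=-\infty$ via Lemma \ref{eqv cond for proper of f}, establish $\D|Rm|\ge -5|Rm|^2$, apply Proposition \ref{sharp estimate for u} with $u=|Rm|$ to get $|Rm|\le Ce^{-v}$, and then use Lemma \ref{f-r is bdd} to upgrade $v$ to $r$. The only step you leave as an ``obstacle'' is precisely the one the paper carries out in detail: it rewrites the reaction term via the first Bianchi identity as $\D R_{abcd}=R_{abpq}R_{cdpq}+2(R_{apcq}R_{bpdq}-R_{apdq}R_{bpcq})$, from which the sharp constant $5$ (indeed dimension-independent) follows directly.
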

\begin{proof}The potential function $f$ is bounded above by a constant and $|Rm|\to 0$ as $r\to \infty$, hence by Lemma \ref{eqv cond for proper of f}, $\displaystyle\lim_{r\to \infty} f=-\infty$ (one may also apply the inequality $S\geq Ce^{f}$ in \cite{MunteanuSungWang-2017}). From the evolution equation of $Rm$ of a solution to the Ricci flow ( \cite{ChowLuNi-2006} c.f. P.119 Lemma 2.51 ), we have in any orthonormal frame,

$$\D R_{abcd}=2(R_{apbq}R_{cpdq}-R_{apbq}R_{dpcq}+R_{apcq}R_{bpdq}-R_{apdq}R_{bpcq}).$$
By first Bianchi identity,
$$R_{cpdq}-R_{dpcq}=R_{cdpq} \text{  and  }$$
$$R_{apbq}=R_{aqbp}+R_{abpq}.$$
Therefore we have
\begin{eqnarray*}
R_{apbq}R_{cpdq}-R_{apbq}R_{dpcq}&=&R_{apbq}R_{cdpq}\\
&=&R_{aqbp}R_{cdpq}+R_{abpq}R_{cdpq}\\
&=&-R_{apbq}R_{cdpq}+R_{abpq}R_{cdpq}.
\end{eqnarray*}
We conclude that
\begin{eqnarray*}
2R_{apbq}R_{cpdq}-2R_{apbq}R_{dpcq}&=&2R_{apbq}R_{cdpq}\\
&=&R_{abpq}R_{cdpq}.
\end{eqnarray*}
The equation for $\D Rm$ becomes
$$\D R_{abcd}=R_{abpq}R_{cdpq}+2(R_{apcq}R_{bpdq}-R_{apdq}R_{bpcq}).$$
The above formula can also be derived using the evolution equation in \cite{Brendle-2010}. Using Kato's inequality and $2|Rm|\D|Rm|+2|\na|Rm||^2=2|\na Rm|^2+2\langle Rm,\D Rm\rangle$, we see that
\be\label{ineq for D Rm}
\D |Rm|\geq -5|Rm|^2
\ee
holds in the weak sense and on the set of points where $|Rm|$ is non-zero. The soliton is not Ricci flat and $S>0$, we may take $u=|Rm|>0$. By Proposition \ref{sharp estimate for u} and $\displaystyle\overline{\lim_{r\to \infty}} r|Rm|<5^{-1}$,
\begin{eqnarray*}
|Rm|&\leq& Ce^{-v}\\
&=&Ce^{f},\\
\end{eqnarray*}
which implies that $S\leq Ce^{f}$. We apply Lemma \ref{f-r is bdd} and conclude that $|Rm|\leq Ce^{-r+c_0}$ on $M$.

\end{proof}

\section{Proofs of theorem \ref{Rm by S} and \ref{decay estimate in dim 4}}

We recall the following estimate for the Ricci curvature of four dimensional gradient steady Ricci soliton by Cao-Cui \cite{CaoCui-2014}:
\begin{thm}\label{CaoCui estimate}\cite{CaoCui-2014} Let $(M^4,g,f)$ be a four dimensional complete non Ricci flat gradient steady Ricci soliton with $\displaystyle\lim_{r\to \infty} S=0$. Then $Rm$ is bounded and $\forall$ $a$ $\in (0,\frac{1}{2})$, there exists a positive constant $C$ such that
\be\label{CaoCui Rc by S}
|\Ric|\leq CS^{a} \text{ on } M.
\ee
\end{thm}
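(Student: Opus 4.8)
The plan is to prove the two assertions separately: the boundedness of $Rm$ first, and then the pinching inequality $|\Ric|\le CS^a$ for every $a\in(0,\tfrac12)$.

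For the boundedness of $Rm$ I would use the four-dimensional reduction of Lemma \ref{Rm to Rc}. Since $|\na f|^2=1-S\to 1$ as $r\to\infty$, the quantity $|\na f|$ is bounded below by a positive constant outside a compact set (which contains the critical points of $f$), so \eqref{Rm controlled by Rc} reduces the task to bounding $|\Ric|$ and $|\na\Ric|$. From $\D R_{ij}=-2R_{ikjl}R_{kl}$ one obtains
\[
\D|\Ric|^2=2|\na\Ric|^2-4R_{ikjl}R_{ij}R_{kl},
\]
and a Bochner-type identity gives, schematically, $\D|\na\Ric|^2=2|\na^2\Ric|^2+Rm\ast\na\Ric\ast\na\Ric+\na Rm\ast\Ric\ast\na\Ric$. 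Re-expressing the curvature factors through Lemma \ref{Rm to Rc} (so that $|Rm|\lesssim|\Ric|+|\na\Ric|$ off the compact set) closes this system, and a maximum principle applied to a combination $|\Ric|^2+\l|\na\Ric|^2$ with a well chosen $\l$ yields a uniform bound $|Rm|\le K$. The properness of $f$, guaranteed by $S\to 0$ together with Lemma \ref{eqv cond for proper of f}, is what makes the maximum principle effective on the noncompact $M$.

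For the pinching estimate I would run a maximum principle on the ratio $G:=|\Ric|^2S^{-2a}$. Writing $w:=S^{-2a}$ and using $\D S=-2|\Ric|^2$ together with $\na S=2\Ric(\na f)$, so that $|\na S|^2\le 4|\Ric|^2(1-S)$, the chain rule gives
\[
\D w=4a\,S^{-2a-1}|\Ric|^2+2a(2a+1)\,S^{-2a-2}|\na S|^2.
\]
At a maximum of $G$ one has $\na|\Ric|^2=-|\Ric|^2\,\na w/w$; substituting this and the bound $\D|\Ric|^2\ge -4K|\Ric|^2$ (which uses $|R_{ikjl}R_{ij}R_{kl}|\le|Rm|\,|\Ric|^2\le K|\Ric|^2$ from the first step) into
\[
\D G=w\,\D|\Ric|^2+|\Ric|^2\,\D w+2\la\na|\Ric|^2,\na w\ra,
\]
the $|\na S|^2$ contributions collect with coefficient $2a(1-2a)\ge 0$, nonnegative precisely because $a\le\tfrac12$. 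Hence $0\ge\D G$ forces $K\ge a\,|\Ric|^2/S$ at the maximum, i.e. $|\Ric|^2\le (K/a)S$ there; since $S\le 1$ and $1-2a\ge 0$ this yields $G=|\Ric|^2S^{-2a}\le (K/a)S^{1-2a}\le K/a$, and the maximum property then propagates $|\Ric|^2\le (K/a)S^{2a}$ to all of $M$.

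The main obstacle is that $M$ is noncompact while both $|\Ric|$ and $S$ tend to $0$ at infinity, so $G$ need not attain its supremum and the interior maximum principle cannot be invoked verbatim. I would resolve this in the weighted framework: because $\Ric_f=\Ric+\na^2 f=0$ is bounded below and $f$ is proper (Section 3), a cutoff-and-localization or weighted Omori--Yau argument produces a sequence $x_k$ along which $G(x_k)\to\sup G$, $|\na G|(x_k)\to 0$ and $\D G(x_k)$ is almost nonpositive; carrying the computation above along $x_k$ and absorbing the resulting gradient error terms is the technical heart of the proof. It is in this limiting step — controlling $G$ near infinity — that the strictness $a<\tfrac12$ (or, as in Cao--Cui, an auxiliary polynomial-decay hypothesis on $S$) supplies the margin needed to dominate the error, even though the purely interior computation is already favourable up to $a=\tfrac12$. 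The remaining delicate point is the trilinear curvature term $R_{ikjl}R_{ij}R_{kl}$, whose control rests entirely on the boundedness of $Rm$ secured in the first step.
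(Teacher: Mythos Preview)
The paper does not supply its own proof of this statement: Theorem~\ref{CaoCui estimate} is quoted from \cite{CaoCui-2014} and used as input for Lemma~\ref{Rc by S} and Theorem~\ref{Rm by S}. So there is no in-paper argument to compare against, only the way the result is invoked. That said, your sketch has a genuine gap worth flagging.

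You assert twice that $f$ is proper, once writing ``the properness of $f$, guaranteed by $S\to 0$ together with Lemma~\ref{eqv cond for proper of f}'' and again ``$f$ is proper (Section~3)''. This is not what Lemma~\ref{eqv cond for proper of f} says: that lemma shows the \emph{equivalence} of three conditions (each of which amounts to properness) under the standing hypothesis $S\to 0$, but it does not manufacture any of them from $S\to 0$ alone. The paper is explicit about this in Remark~6 to Theorem~\ref{Rm by S}: ``No assumption on the potential function $f$ is made in the above theorem. In particular, $f$ is not known to be proper in this setting.'' Your Step~1 (bounding $|Rm|$ via a maximum principle on $|\Ric|^2+\lambda|\na\Ric|^2$) therefore cannot be closed by an $f$-based exhaustion, and since Step~2 feeds on the bound $|Rm|\le K$ from Step~1, the whole argument stalls.

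The repair, visible in how the paper handles the analogous issue in its proof of Theorem~\ref{Rm by S}, is to localize with a cutoff in the \emph{distance} function $r$ rather than in $f$, and to control $\D r$ via the Wei--Wylie Laplacian comparison for $\Ric_f\ge 0$ with $|\na f|\le 1$ (\cite{WeiWylie-2009}), namely $\D r\le \frac{n-1}{r}+1$. This is also what Cao--Cui do. Your Omori--Yau suggestion is not an adequate substitute at the level of Step~1, because the usual hypothesis (Ricci bounded below) is exactly what you are trying to prove; the $f$-version with $\Ric_f\ge 0$ and $|\na f|$ bounded does apply, but carrying the cutoff computation through, with the extra $\na\ln S$ drift and the $|\na\Ric|^2$ terms, is precisely the work you have deferred. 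In short: the algebraic skeleton of your Step~2 (the computation of $\D(|\Ric|^2 S^{-2a})$ and the sign condition at $a<\tfrac12$) is in the right spirit, but the mechanism that makes the maximum principle bite on a noncompact $M$ has to be the distance-based cutoff, not properness of $f$.
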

We first prove a slightly better estimate for the Ricci curvature.
\begin{lma}\label{Rc by S}Let $(M^4,g,f)$ be a four dimensional complete non Ricci flat gradient steady Ricci soliton with $\displaystyle\lim_{r\to \infty} S=0$. There exists a positive constant $A_1$ such that
\be\label{Rc by S ineq}
|\Ric|\leq A_1 S \text{  on  } M.
\ee
\end{lma}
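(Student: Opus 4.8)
The plan is to bound the scale-invariant ratio $h:=|\Ric|^2/S^2$ from above by a maximum principle for the drift Laplacian $\D$; since $S>0$ on a non Ricci flat soliton, $\sup_M h<\infty$ is exactly the desired bound $|\Ric|\le A_1 S$. By Theorem \ref{CaoCui estimate} I may assume $|Rm|$ is bounded and $|\Ric|\le CS^a$ for all $a\in(0,\tfrac12)$, so $|\Ric|,S\to 0$ and $\Ric=\na^2 v$ is bounded; in particular $\Ric$ is bounded below, so an Omori--Yau type maximum principle for $\D$ is available and I can work at an interior maximum of $h$ or, if the supremum is not attained, along an almost-maximizing sequence produced by point-picking. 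From $\D S=-2|\Ric|^2$ and $\D R_{ij}=-2R_{iklj}R_{kl}$ I compute $\D|\Ric|^2=2|\na\Ric|^2-4R_{iklj}R_{ij}R_{kl}$ and $\D S^2=2|\na S|^2-4S|\Ric|^2$, and expand $\D h$ by the quotient rule. At a maximum $\na h=0$ gives $\na|\Ric|^2=2(|\Ric|^2/S)\na S$, and combined with Kato's inequality $|\na\Ric|^2\ge|\na|\Ric|^2|^2/(4|\Ric|^2)$ the two gradient terms $2|\na\Ric|^2$ and $-2|\na S|^2|\Ric|^2/S^2$ cancel \emph{exactly}. The inequality $\D h\le 0$ therefore collapses to the clean pointwise estimate $R_{iklj}R_{ij}R_{kl}\ge |\Ric|^4/S$ at the maximum, and it is this that I aim to contradict.

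It remains to bound the cubic curvature term from above. The crude bound $R_{iklj}R_{ij}R_{kl}\le c|Rm||\Ric|^2$ with $|Rm|$ merely bounded only yields $|\Ric|^2\le cS$, i.e.\ the exponent $a=\tfrac12$, so the improvement to a linear bound must use dimension four. Decomposing $Rm$ into its Weyl, Ricci and scalar parts, the Ricci and scalar contractions against $\Ric\otimes\Ric$ produce only harmless terms of size $\lesssim S|\Ric|^2+|\Ric|^3$, and the essential term is the Weyl contraction, bounded by $|W||\Ric|^2\le|Rm||\Ric|^2$. I would then invoke Lemma \ref{Rm to Rc}, $|Rm|\le A_0(|\Ric|+|\na\Ric|/|\na f|)$, and use $|\na f|\to 1$ to reduce, at the maximum, to $|\Ric|^2/S\lesssim|\Ric|+|\na\Ric|$. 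Were the derivative term absorbable, say $|\na\Ric|\le K|\Ric|$, this would read $|\Ric|^2/S\lesssim|\Ric|$, hence $|\Ric|\lesssim S$ at the maximum, bounding $h$ and giving $|\Ric|\le A_1 S$ globally after fixing the universal constants.

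The main obstacle is precisely the control of $|\na\Ric|$ by $|\Ric|$, equivalently (via Lemma \ref{Rm to Rc}) the upgrade of $|Rm|$ from bounded to controlled by $|\Ric|$: a reverse-Poincar\'e bound of this kind does not follow from $|\Ric|\le CS^a$ alone, and securing the \emph{sharp} power of $S$ in the cubic term is exactly where four dimensions are indispensable. The natural fix, which should simultaneously yield Theorem \ref{Rm by S}, is to run the maximum principle not on $|\Ric|^2/S^2$ but on the larger quantity $(|\Ric|^2+\varepsilon|\na\Ric|^2/|\na f|^2)/S^2$, which by Lemma \ref{Rm to Rc} is comparable to $|Rm|^2/S^2$, so that the $|\na\Ric|$ appearing in the cubic estimate is itself part of the quantity being controlled; differentiating $\D R_{ij}=-2R_{iklj}R_{kl}$ supplies the evolution of $|\na\Ric|^2$ needed to close the scheme. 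I expect the delicate point to be the bookkeeping of the extra first- and second-order terms generated by this differentiation and the verification that, after the $S\to0$ normalization, they are all dominated by the good reaction term $4|\Ric|^4/S$.
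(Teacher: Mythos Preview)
Your diagnosis is accurate: the whole difficulty is the $|\na\Ric|$ that enters through Lemma~\ref{Rm to Rc}, and your scheme on $h=|\Ric|^2/S^2$ stalls precisely because you spend the entire good term $2S^{-2}|\na\Ric|^2$ on the Kato cancellation, leaving nothing to absorb $|\na\Ric|$ when you invoke $|Rm|\le A_0(|\Ric|+|\na\Ric|/|\na f|)$. Keeping only a fraction of Kato does not rescue the argument either: the residual $\delta S^{-4}|\Ric|^2|\na S|^2$ scales like $\delta h^2$, which dominates the good reaction term $4S^{-3}|\Ric|^4=4h\,S^{-1}|\Ric|^2$ whenever $S$ is small, so the inequality cannot be closed at a far-out (almost-)maximum. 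Your proposed fix of enlarging the test function to include $\varepsilon|\na\Ric|^2/|\na f|^2$ is not worked out, and in fact this is essentially the device the paper uses later (in Lemma~\ref{lma for ineq of S-2 Rc}) to prove the stronger Theorem~\ref{Rm by S}; for the present lemma there is a much shorter route.

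The paper avoids the ratio altogether and applies the maximum principle to $|\Ric|+|\Ric|^2-CS$. The point is that $\D|\Ric|^2\ge |\na\Ric|^2-4A_0|\Ric|^3-4A_0^2|\Ric|^4/|\na f|^2$ (keep one full copy of $|\na\Ric|^2$ after using $|Rm|\le A_0(|\Ric|+|\na\Ric|/|\na f|)$ and completing the square once), while $\D|\Ric|\ge -2A_0|\Ric|^2-2A_0|\Ric|\,|\na\Ric|/|\na f|$. Adding these and completing the square $\big(|\na\Ric|-A_0|\Ric|/|\na f|\big)^2\ge 0$ absorbs the dangling $|\na\Ric|$ term at the cost of $A_0^2|\Ric|^2/|\na f|^2$, and with $|\Ric|\le 1$, $|\na f|\ge\tfrac12$ outside a compact set one obtains
\[
\D(|\Ric|+|\Ric|^2)\ \ge\ -(6A_0+20A_0^2)\,|\Ric|^2.
\]
Since $\D S=-2|\Ric|^2$, choosing $C\ge 3A_0+10A_0^2+1$ makes $|\Ric|+|\Ric|^2-CS$ strictly $\D$-subharmonic outside a ball; by Theorem~\ref{CaoCui estimate} it tends to $0$ at infinity, so the ordinary maximum principle (no Omori--Yau needed) gives $|\Ric|\le|\Ric|+|\Ric|^2\le CS$. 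In short, the missing idea is not to bound $|\na\Ric|$ by $|\Ric|$, but to let the $|\na\Ric|^2$ produced by $\D|\Ric|^2$ eat the first-order $|\na\Ric|$ via Cauchy--Schwarz; working with $|\Ric|+|\Ric|^2$ (rather than a scale-invariant ratio) is exactly what makes this bookkeeping close.
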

\begin{remark}
The constant $A_1$ depends on $A_0$ in (\ref{Rm controlled by Rc}) and $\displaystyle\sup_{B_{R_0}(p_0)}\frac{|\Ric|}{S}$ for some large ball $B_{R_0}(p_0)$.
\end{remark}

\begin{proof} We claim that for some large $R_0>0$, on $M\setminus B_{R_0}(p_0)$
\be\label{ineq for Rc+RC2}
\D (|\Ric|+|\Ric|^2)\geq -(6A_0+20A_0^2)|\Ric|^2,
\ee
where $A_0$ is the universal constant in (\ref{Rm controlled by Rc}). We first assume the claim and prove the lemma. There exists a large constant $C>0$ such that the following inequalities are true
\be\label{condition 1 for C}
|\Ric|+|\Ric|^2-CS<0 \text{ on  } \partial B_{R_0}(p_0),
\ee
\be\label{condition 2 for C}
C\geq 3A_0+10A_0^2+1.
\ee
Using (\ref{ineq for Rc+RC2}), (\ref{condition 2 for C}) and (\ref{eqn of S}),
\begin{eqnarray*}
\D (|\Ric|+|\Ric|^2-CS)&=&\D (|\Ric|+|\Ric|^2)+ 2C|Ric|^2\\
&\geq&(2C-6A_0-20A_0^2)|\Ric|^2\\
&\geq& 2|\Ric|^2\\
&>&0.
\end{eqnarray*}
By Theorem \ref{CaoCui estimate}, we have $\displaystyle\lim_{r\to \infty} (|\Ric|+|\Ric|^2-CS)=0$. Using the maximum principle with boundary condition (\ref{condition 1 for C}), we see that on $M\setminus B_{R_0}(p_0)$, $|\Ric|+|\Ric|^2-CS\leq 0$. Choosing a bigger constant $C$ if necessary,
\begin{eqnarray*}
|\Ric|&\leq& |\Ric|+|\Ric|^2\\
&\leq& CS
\end{eqnarray*}
holds globally on $M$. To finish the argument, we need to show the inequality (\ref{ineq for Rc+RC2}) is true. By (\ref{eqn of Rc}) and (\ref{Rm controlled by Rc})
\begin{eqnarray*}
\D |\Ric|^2 &=& 2|\na\Ric|^2+2\langle \Ric, \D\Ric\rangle\\
&=&2|\na\Ric|^2-4R_{ij}R_{iklj}R_{kl}\\
&\geq& 2|\na\Ric|^2-4|Rm||\Ric|^2\\
&\geq& 2|\na\Ric|^2-4A_0|\Ric|^3-4A_0|\Ric|^2\frac{|\na \Ric|}{|\na f|}.
\end{eqnarray*}
Using Kato's inequality, we have
\be\label{ineq for abs val of Rc}
\D |\Ric|\geq -2A_0|\Ric|^2-2A_0|\Ric|\frac{|\na \Ric|}{|\na f|}.
\ee
We can further simplify the inequality for $|Ric|^2$,
\begin{eqnarray}
\notag
\D |\Ric|^2 &\geq& |\na\Ric|^2-4A_0|\Ric|^3+\Big(|\na\Ric|^2-4A_0|\Ric|^2\frac{|\na \Ric|}{|\na f|}\Big)\\
\notag
&=&|\na\Ric|^2-4A_0|\Ric|^3-\frac{4A_0^2|\Ric|^4}{|\na f|^2}\\
\notag
&&+\Big(|\na\Ric|^2-4A_0|\Ric|^2\frac{|\na \Ric|}{|\na f|}+\frac{4A_0^2|\Ric|^4}{|\na f|^2}\Big)\\
\notag
&=&|\na\Ric|^2-4A_0|\Ric|^3-\frac{4A_0^2|\Ric|^4}{|\na f|^2}\\
\notag
&&+\Big(|\na\Ric|-\frac{2A_0|\Ric|^2}{|\na f|}\Big)^2\\
\label{ineq for Rc2}
&\geq&|\na\Ric|^2-4A_0|\Ric|^3-\frac{4A_0^2|\Ric|^4}{|\na f|^2}.
\end{eqnarray}

Hence by (\ref{ineq for abs val of Rc})
\begin{eqnarray*}
\D (|\Ric|+|\Ric|^2)&\geq& -2A_0|\Ric|^2-2A_0|\Ric|\frac{|\na \Ric|}{|\na f|}\\
&&+|\na\Ric|^2-4A_0|\Ric|^3-\frac{4A_0^2|\Ric|^4}{|\na f|^2}\\
%&=&|\na\Ric|^2-2A_0|\Ric|\frac{|\na \Ric|}{|\na f|}-2A_0|\Ric|^2\\
%&&-4A_0|\Ric|^3-\frac{4A_0^2|\Ric|^4}{|\na f|^2}\\
&=&\Big(|\na\Ric|^2-2A_0|\Ric|\frac{|\na \Ric|}{|\na f|}+\frac{A_0^2|\Ric|^2}{|\na f|^2}\Big)\\
&&-\frac{A_0^2|\Ric|^2}{|\na f|^2}-2A_0|\Ric|^2-4A_0|\Ric|^3\\
&&-\frac{4A_0^2|\Ric|^4}{|\na f|^2}\\
&=&\Big(|\na\Ric|-\frac{A_0|\Ric|}{|\na f|}\Big)^2-\frac{A_0^2|\Ric|^2}{|\na f|^2}\\
&&-2A_0|\Ric|^2-4A_0|\Ric|^3-\frac{4A_0^2|\Ric|^4}{|\na f|^2}\\
&\geq&-\frac{A_0^2|\Ric|^2}{|\na f|^2}-2A_0|\Ric|^2-4A_0|\Ric|^3\\
&&-\frac{4A_0^2|\Ric|^4}{|\na f|^2}.
\end{eqnarray*}
By Theorem \ref{CaoCui estimate}, (\ref{eqn of naf}) and $\displaystyle\lim_{r\to \infty} S=0$, there exists a large positive $R_0$ such that on $M\setminus B_{R_0}(p_0)$
$$|\Ric|\leq 1 \text{  and }$$
$$|\na f|\geq \frac{1}{2}.$$
Using the above two inequalities, we have
\begin{eqnarray*}
\D (|\Ric|+|\Ric|^2)&\geq&-\frac{A_0^2|\Ric|^2}{|\na f|^2}-2A_0|\Ric|^2-4A_0|\Ric|^3\\
&&-\frac{4A_0^2|\Ric|^4}{|\na f|^2}\\
&\geq&-4A_0^2|\Ric|^2-2A_0|\Ric|^2-4A_0|\Ric|^2\\
&&-16A_0^2|\Ric|^2\\
&=&-(6A_0+20A_0^2)|\Ric|^2.
\end{eqnarray*}
We are done with the proof of the lemma.
\end{proof}
We can apply the Ricci curvature estimate to bound the curvature tensor $Rm$. To prepare for the proof, we start with the following computational lemma.
\begin{lma}\label{lma for ineq of S-2 Rc}Let $(M^4,g,f)$ be a four dimensional complete non Ricci flat gradient steady Ricci soliton with $\displaystyle\lim_{r\to \infty} S=0$. There exists a large constant $R_0>1$ such that for all $\lambda\geq0$, on $M\setminus B_{R_0}(p_0)$
\be\label{ineq for S-2 Rc}
\Delta_{f-2\ln S}(S^{-2}|\Ric|^2)\geq \frac{1}{32A_0^2}\Big(S^{-1}|Rm|+\lambda S^{-2}|\Ric|^2\Big)^2-c\lambda^2-c,
\ee
where $c$ is a positive constant depending only on $A_0$ and $A_1$ in (\ref{Rm controlled by Rc}) and (\ref{Rc by S ineq}) respectively.
\end{lma}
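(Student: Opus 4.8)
The plan is to compute $\Delta_{f-2\ln S}(S^{-2}|\Ric|^2)$ explicitly and then funnel every resulting term into a single favourable multiple of $S^{-2}|Rm|^2$ plus a bounded remainder. First I would record the operator identity: since $\na(f-2\ln S)=\na f-2S^{-1}\na S$, we have $\Delta_{f-2\ln S}=\D+2S^{-1}\na_{\na S}$. I would also make the following elementary reduction at the outset. Using $(a+b)^2\le 2a^2+2b^2$ with $a=S^{-1}|Rm|$, $b=\lambda S^{-2}|\Ric|^2$, together with $|\Ric|\le A_1 S$ from Lemma \ref{Rc by S} (so $S^{-4}|\Ric|^4\le A_1^4$), one gets
$$\frac{1}{32A_0^2}\big(S^{-1}|Rm|+\lambda S^{-2}|\Ric|^2\big)^2\le \frac{1}{16A_0^2}S^{-2}|Rm|^2+\frac{A_1^4}{16A_0^2}\lambda^2.$$
Hence, after absorbing the $\lambda^2$-term into $-c\lambda^2$, it suffices to prove that on $M\setminus B_{R_0}(p_0)$ one has $\Delta_{f-2\ln S}(S^{-2}|\Ric|^2)\ge \frac{1}{16A_0^2}S^{-2}|Rm|^2-c$; the parameter $\lambda$ then plays no further role.

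Writing $P:=|\Ric|^2$, a direct computation with the product rule and $\D S=-2|\Ric|^2$ (equation (\ref{eqn of S})) gives
$$\Delta_{f-2\ln S}(S^{-2}P)=S^{-2}\D P+4S^{-3}|\Ric|^4+2S^{-4}|\Ric|^2|\na S|^2-2S^{-3}\la\na S,\na P\ra.$$
Next I would invoke the estimate $\D P=\D|\Ric|^2\ge 2|\na\Ric|^2-4|Rm||\Ric|^2$, obtained from (\ref{eqn of Rc}) and Kato's inequality exactly as in the proof of Lemma \ref{Rc by S}, so that the first term contributes $2S^{-2}|\na\Ric|^2-4S^{-2}|Rm||\Ric|^2$. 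The two terms $4S^{-3}|\Ric|^4$ and $2S^{-4}|\Ric|^2|\na S|^2$ are nonnegative and may simply be dropped.

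It then remains to dispose of the cross term, the $|Rm||\Ric|^2$ term, and to convert $|\na\Ric|^2$ into $|Rm|^2$. For the cross term I would use $|\na P|\le 2|\Ric||\na\Ric|$ and $|\na S|\le 2|\Ric|$ (from (\ref{na S})) with Young's inequality to get $|2S^{-3}\la\na S,\na P\ra|\le S^{-2}|\na\Ric|^2+4S^{-4}|\na S|^2|\Ric|^2$, whose second summand is bounded since $|\na S|^2|\Ric|^2\le 4|\Ric|^4\le 4A_1^4S^4$. For the next term, $4S^{-2}|Rm||\Ric|^2=4(S^{-1}|Rm|)(S^{-1}|\Ric|^2)\le \e S^{-2}|Rm|^2+4A_1^4\e^{-1}$, again via $|\Ric|^2\le A_1^2S^2$ and $S\le1$. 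Finally the squared form of Lemma \ref{Rm to Rc}, namely $|Rm|^2\le 2A_0^2|\Ric|^2+2A_0^2|\na f|^{-2}|\na\Ric|^2$, gives $|\na\Ric|^2\ge \frac{|\na f|^2}{2A_0^2}|Rm|^2-|\na f|^2|\Ric|^2$, where on $M\setminus B_{R_0}(p_0)$ I may assume $S$ small enough that $\tfrac12\le|\na f|^2\le1$ by (\ref{eqn of naf}) and $S\to0$. Collecting everything, the surviving gradient term (with coefficient $2-1=1$) yields at least $\frac{1}{4A_0^2}S^{-2}|Rm|^2$, and choosing $\e=\frac{1}{8A_0^2}$ leaves a coefficient $\frac{1}{4A_0^2}-\frac{1}{8A_0^2}=\frac{1}{8A_0^2}\ge\frac{1}{16A_0^2}$ in front of $S^{-2}|Rm|^2$; all discarded pieces are bounded by a constant depending only on $A_0$ and $A_1$.

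I expect the main obstacle to be purely the bookkeeping: one must check that each error term is of the shape $S^{-2}|\Ric|^2$, $S^{-4}|\Ric|^4$, or $S^{-4}|\na S|^2|\Ric|^2$, \emph{every one of which becomes a constant in $A_0,A_1$ precisely because of the linear bound $|\Ric|\le A_1 S$} of Lemma \ref{Rc by S}, and that the chosen constants $\e$ and the threshold $R_0$ leave strictly more than the required $\frac{1}{16A_0^2}S^{-2}|Rm|^2$. The role of the hypothesis $S\to0$ is exactly to guarantee, by enlarging $R_0$, both $|\na f|^2\ge\tfrac12$ and the smallness needed so that the lower-order curvature contributions do not compete with the leading $S^{-2}|Rm|^2$ term.
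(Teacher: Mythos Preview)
Your argument is correct and follows essentially the same route as the paper: both compute $\Delta_{f-2\ln S}(S^{-2}|\Ric|^2)$, use (\ref{eqn of Rc}) to bound $\D|\Ric|^2$ from below by $|\na\Ric|^2$ minus a $|Rm||\Ric|^2$ term, control all error terms via $|\Ric|\le A_1S$ and $|\na S|\le 2A_1S$, and then convert the surviving $S^{-2}|\na\Ric|^2$ into $S^{-2}|Rm|^2$ through Lemma~\ref{Rm to Rc}. Your initial reduction that disposes of $\lambda$ via $(a+b)^2\le 2a^2+2b^2$ is a mild streamlining of the paper's final step, where $\lambda$ is introduced at the end through $a^2\ge\tfrac12(a+b)^2-b^2$; otherwise the two proofs differ only in bookkeeping.
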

\begin{proof}
\begin{eqnarray*}
\D (S^{-2}|\Ric|^2)&=&S^{-2}\D |Ric|^2+2\langle \na S^{-2}, \na |\Ric|^2\rangle+ |\Ric|^2\D S^{-2}.
\end{eqnarray*}
By (\ref{eqn of S})
\begin{eqnarray*}
\D S^{-2}&=&-2S^{-3}\D S+6S^{-4}|\na S|^2\\
&=& 4S^{-3}|\Ric|^2+6S^{-2}|\na \ln S|^2.
\end{eqnarray*}

\begin{eqnarray*}
\langle \na S^{-2}, \na |\Ric|^2\rangle&=&-2S^{-3}\langle \na S, \na(S^{-2}|\Ric|^2S^2)\rangle\\
&=&-2\langle \na \ln S, \na(S^{-2}|\Ric|^2)\rangle-4|\na \ln S|^2S^{-2}|\Ric|^2.
\end{eqnarray*}
By Kato's inequality,
\begin{eqnarray*}
\langle \na S^{-2}, \na |\Ric|^2\rangle&=&-2S^{-3}\langle \na S, 2|\Ric|\na |\Ric|\rangle\\
&\geq&-4S^{-2}|\na \ln S||\Ric||\na \Ric|\\
&\geq&-\frac{1}{2}S^{-2}|\na \Ric|^2-8|\na \ln S|^2S^{-2}|\Ric|^2.
\end{eqnarray*}

\begin{eqnarray*}
\D (S^{-2}|\Ric|^2)&\geq& S^{-2}\D |Ric|^2-2\langle \na \ln S, \na(S^{-2}|\Ric|^2)\rangle\\
&&-12|\na \ln S|^2S^{-2}|\Ric|^2-\frac{1}{2}S^{-2}|\na \Ric|^2+ |\Ric|^2\D S^{-2}\\
&=&S^{-2}\D |Ric|^2-2\langle \na \ln S, \na(S^{-2}|\Ric|^2)\rangle\\
&&-12|\na \ln S|^2S^{-2}|\Ric|^2-\frac{1}{2}S^{-2}|\na \Ric|^2\\
&&+ 4S^{-3}|\Ric|^4+6|\na \ln S|^2S^{-2}|\Ric|^2\\
&\geq&S^{-2}\D |Ric|^2-2\langle \na \ln S, \na(S^{-2}|\Ric|^2)\rangle\\
&&-6|\na \ln S|^2S^{-2}|\Ric|^2-\frac{1}{2}S^{-2}|\na \Ric|^2.
\end{eqnarray*}
By Lemma \ref{Rc by S} and (\ref{na S}), $|\na S|=2|\Ric(\na f)|\leq 2|\Ric||\na f|\leq 2A_1 S$, where $A_1$ is the constant in (\ref{Rc by S ineq}). We now consider a large $R_0>0$ such that on $M\setminus B_{R_0}(p_0)$, $|\Ric|\leq 1$ and $|\na f|\geq \frac{1}{2}$.
Hence by (\ref{ineq for Rc2}) in Lemma \ref{Rc by S}, on $M\setminus B_{R_0}(p_0)$,
%\begin{eqnarray*}
%\Delta_{f-2\ln S}(S^{-2}|\Ric|^2)&\geq&\frac{1}{2}S^{-2}|\na \Ric|^2-4A_0S^{-2}|\Ric|^3\\
%&&-\frac{4A_0^2|\Ric|^4}{S^2|\na f|^2}-6|\na \ln S|^2S^{-2}|\Ric|^2\\
%&\geq&\frac{1}{2}S^{-2}|\na \Ric|^2-(4A_0+16A_0^2+6|\na \ln S|^2)S^{-2}|\Ric|^2\\
%&\geq&\frac{1}{2}S^{-2}|\na \Ric|^2-(4A_0+16A_0^2+24A_1^2)S^{-2}|\Ric|^2\\
%&\geq&\frac{1}{2}S^{-2}|\na \Ric|^2-(4A_0+16A_0^2+24A_1^2)A_1^2.
%\end{eqnarray*}
\begin{eqnarray*}
\Delta_{f-2\ln S}(\frac{|\Ric|^2}{S^2})&\geq&\frac{|\na \Ric|^2}{2S^2}-4A_0\frac{|\Ric|^3}{S^2}\\
&&-\frac{4A_0^2|\Ric|^4}{S^2|\na f|^2}-6|\na \ln S|^2\frac{|\Ric|^2}{S^2}\\
&\geq&\frac{|\na \Ric|^2}{2S^2}-(4A_0+16A_0^2+6|\na \ln S|^2)\frac{|\Ric|^2}{S^2}\\
&\geq&\frac{|\na \Ric|^2}{2S^2}-(4A_0+16A_0^2+24A_1^2)\frac{|\Ric|^2}{S^2}\\
&\geq&\frac{|\na \Ric|^2}{2S^2}-(4A_0+16A_0^2+24A_1^2)A_1^2.
\end{eqnarray*}
Using (\ref{Rm controlled by Rc}), $|Rm|^2\leq 2A_0^2(|\Ric|^2+\frac{|\na \Ric|^2}{|\na f|^2})\leq 2A_0^2(|\Ric|^2+4|\na \Ric|^2)$. We apply the inequality $2^{-1}(a+b)^2-b^2\leq a^2$ to conclude that for all $\lambda\geq 0$,
\begin{eqnarray*}
\Delta_{f-2\ln S}(S^{-2}|\Ric|^2)&\geq&\frac{1}{16A_0^2}S^{-2}|Rm|^2-\frac{1}{8}S^{-2}|\Ric|^2\\
&&-(4A_0+16A_0^2+24A_1^2)A_1^2\\
&\geq&\frac{1}{32A_0^2}\Big(S^{-1}|Rm|+\lambda S^{-2}|\Ric|^2\Big)^2-\frac{1}{8}S^{-2}|\Ric|^2\\
&&-\frac{\lambda^2}{16A_0^2}S^{-4}|\Ric|^4-(4A_0+16A_0^2+24A_1^2)A_1^2\\
&\geq&\frac{1}{32A_0^2}\Big(S^{-1}|Rm|+\lambda S^{-2}|\Ric|^2\Big)^2-\frac{\lambda^2}{16A_0^2}A_1^4\\
&&-\frac{1}{8}A_1^2-(4A_0+16A_0^2+24A_1^2)A_1^2\\
&\geq&\frac{1}{32A_0^2}\Big(S^{-1}|Rm|+\lambda S^{-2}|\Ric|^2\Big)^2-c\lambda^2-c,
\end{eqnarray*}
where $c$ only depends on $A_0$ and $A_1$. This ends the proof of the lemma.
%\notag
\end{proof}
Let us recall the statement of Theorem \ref{Rm by S}:
\begin{thm*} Let $(M^4,g,f)$ be a four dimensional complete non Ricci flat gradient steady Ricci soliton with $\displaystyle\lim_{r\to \infty} S=0$. There exists a positive constant $c$ such that
\be\label{second rm by S ineq}
|Rm|\leq cS \text{  on  } M.
\ee
\end{thm*}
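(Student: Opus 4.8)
The plan is to obtain the theorem from a single application of the maximum principle to the scale-invariant quantity $w:=S^{-2}|\Ric|^2$, with Lemma~\ref{lma for ineq of S-2 Rc} furnishing the differential inequality and the free parameter $\l$ carrying the decisive weight. I begin with two reductions. By Lemma~\ref{Rc by S} we already know $|\Ric|\le A_1 S$, so $w\le A_1^2$ is bounded on all of $M$ and $\sup_M w<\infty$. By Lemma~\ref{Rm to Rc}, $|Rm|\le A_0\big(|\Ric|+|\na\Ric|/|\na f|\big)$, and since $\lim_{r\to\infty}S=0$ forces $|\na f|\ge\tfrac12$ outside a large ball by \eqref{eqn of naf}, the estimate $|Rm|\le cS$ is equivalent there to the pointwise bound $S^{-1}|Rm|\le c$. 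On the compact ball $B_{R_0}(p_0)$ the bound is automatic, since $Rm$ is bounded by Theorem~\ref{CaoCui estimate} and $S>0$ because the soliton is not Ricci flat; hence it suffices to bound $S^{-1}|Rm|$ on $M\setminus B_{R_0}(p_0)$.

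The engine is the weighted maximum principle for $\Delta_{f-2\ln S}$ applied to $w$. By Lemma~\ref{lma for ineq of S-2 Rc}, on $M\setminus B_{R_0}(p_0)$ and for every $\l\ge 0$,
\[
\Delta_{f-2\ln S}\big(S^{-2}|\Ric|^2\big)\ge \frac{1}{32A_0^2}\big(S^{-1}|Rm|+\l\,S^{-2}|\Ric|^2\big)^2-c\l^2-c.
\]
Evaluating at a point $x_0$ where $w$ is maximal gives $\Delta_{f-2\ln S}w(x_0)\le 0$, hence $\big(S^{-1}|Rm|+\l w\big)^2(x_0)\le 32A_0^2(c\l^2+c)$ for every $\l\ge0$. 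Choosing $\l=0$ yields $S^{-1}|Rm|(x_0)\le A_0\sqrt{32c}$, while dropping the nonnegative term $S^{-1}|Rm|$, dividing by $\l$, and letting $\l\to\infty$ forces $\sup_M w=w(x_0)\le A_0\sqrt{32c}$, a bound depending only on $A_0$. This simultaneous control of $\sup w$ and of $S^{-1}|Rm|$ at the extremal point is exactly what the completed-square form of Lemma~\ref{lma for ineq of S-2 Rc} was built to deliver.

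Since $M$ is noncompact, $w$ need not attain its supremum, so I would run the step above through a cutoff or Omori--Yau maximum principle for $\Delta_{f-2\ln S}$; the drift is harmless because $|\na(f-2\ln S)|\le |\na f|+2|\na\ln S|\le 1+4A_1$ is bounded (using \eqref{eqn of naf} and $|\na S|\le 2A_1 S$), and $\lim_{r\to\infty}S=0$ supplies the behaviour at infinity, producing a sequence $x_k$ along which $w(x_k)\to\sup w$ and $\limsup_k\Delta_{f-2\ln S}w(x_k)\le0$. The same inequalities then bound $\sup_M w$ universally and $S^{-1}|Rm|(x_k)$ along the sequence.

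The main obstacle, where I expect the real work to lie, is the passage from this extremal control to a uniform pointwise bound for $S^{-1}|Rm|$ at \emph{every} point of $M\setminus B_{R_0}(p_0)$: the maximum principle only constrains $S^{-1}|Rm|$ near the maximum of $w$, whereas the theorem asks for the bound everywhere. To close this gap I would exploit the now-universal bound $\sup_M w\le A_0\sqrt{32c}$ by feeding it back into the intermediate estimate $\tfrac{1}{2}S^{-2}|\na\Ric|^2\le \Delta_{f-2\ln S}w+c'$ extracted from the proof of Lemma~\ref{lma for ineq of S-2 Rc} (via \eqref{ineq for Rc2}), reducing the matter to a uniform bound on $S^{-1}|\na\Ric|$, which is equivalent to the desired control of $S^{-1}|Rm|$ through the reduction of the first paragraph. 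Turning the extremal information into this global gradient bound---either by a barrier/comparison argument for $w$ adapted to the weight, or by a local energy estimate on the noncompact end---is the crux on which the whole proof hinges.
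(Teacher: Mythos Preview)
Your proposal has a genuine gap, and you have correctly identified it yourself: applying the maximum principle to $w=S^{-2}|\Ric|^2$ alone only yields control of $S^{-1}|Rm|$ at the extremal point of $w$, not everywhere. The suggested workaround---feeding the bound on $\sup_M w$ back into an inequality of the form $\tfrac12 S^{-2}|\na\Ric|^2\le \Delta_{f-2\ln S}w+c'$---does not close the loop, because a bound on $\sup_M w$ gives no pointwise bound on $\Delta_{f-2\ln S}w$; subharmonic-type inequalities constrain extremal values, not the Laplacian at generic points. Moreover, since you already know $w\le A_1^2$ from Lemma~\ref{Rc by S}, the maximum-principle step on $w$ adds nothing new: the entire content of the theorem lies in the passage from $|\Ric|\le A_1S$ to $|\na\Ric|\le cS$, and your scheme does not supply this.

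The paper resolves this by changing the quantity to which the maximum principle is applied. One first derives, from $\D|Rm|\ge -5|Rm|^2$, the weighted inequality $\Delta_{f-2\ln S}(S^{-1}|Rm|)\ge -5S^{-1}|Rm|^2$. Adding $\l$ times the inequality of Lemma~\ref{lma for ineq of S-2 Rc} and choosing $\l=192A_0^2$ yields, for $W:=S^{-1}|Rm|+\l\,S^{-2}|\Ric|^2$, the clean inequality $\Delta_{f-2\ln S}W\ge W^2-c$ outside a compact set (the negative term $-5S^{-1}|Rm|^2=-5S(S^{-1}|Rm|)^2\ge -5S\,W^2$ is absorbed since $S\to 0$). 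The maximum principle then bounds $W$ directly, and since $S^{-1}|Rm|\le W$, the global bound on $S^{-1}|Rm|$ follows immediately with no extremal-to-global passage needed. This is precisely the role of the parameter $\l$ in Lemma~\ref{lma for ineq of S-2 Rc}: it is not meant to be varied at a single point, but fixed so that the combined quantity $W$ satisfies a self-controlling inequality. Finally, since $f$ is not assumed proper here, the paper implements the maximum principle via a cutoff $\phi=\psi(r/R)$ together with the Wei--Wylie Laplacian comparison $\D r\le 3/r+1$, rather than Omori--Yau.
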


\begin{remark}
The constant $c$ in (\ref{second rm by S ineq}) again depends on $A_0$ in (\ref{Rm controlled by Rc}), $A_1$ in (\ref{Rc by S ineq}) and $\displaystyle\sup_{B_{R_0}(p_0)}S^{-1}|Rm|$ for some large ball $B_{R_0}(p_0)$.
\end{remark}
\begin{remark}No assumption on the potential function $f$ is made in the above theorem. In particular, $f$ is not known to be proper in this setting. To construct a cut off function for the maximum principle argument, we use the distance function $r$ together with an appropriate Laplacian comparison theorem in \cite{WeiWylie-2009}.
\end{remark}
\begin{proof}The argument is essentially due to Munteanu-Wang \cite{MunteanuWang-2015.3} and Cao-Cui \cite{CaoCui-2014}. We know that by (\ref{ineq for D Rm})
$$\D |Rm|\geq -5|Rm|^2.$$
%\be\label{ineq for Rm}
%\D |Rm|\geq -5|Rm|^2
%\ee
\begin{eqnarray*}
\D (S^{-1}|Rm|) &=&S^{-1}\D |Rm|+2\langle \na S^{-1}, \na (SS^{-1}|Rm|)\ra\\
&&+|Rm|\D S^{-1}\\
&=&S^{-1}\D |Rm|-2\langle \na \ln S, \na (S^{-1}|Rm|)\rangle\\
&&-2|\na \ln S|^2S^{-1}|Rm|+|Rm|\D S^{-1}\\
&\geq&-5S^{-1}|Rm|^2-2\langle \na \ln S, \na (S^{-1}|Rm|)\rangle\\
&&+|Rm|(2S^{-2}|\Ric|^2+2S^{-1}|\na \ln S|^2)\\
&&-2|\na \ln S|^2S^{-1}|Rm|\\
&\geq&-5S^{-1}|Rm|^2-2\langle \na \ln S, \na (S^{-1}|Rm|)\rangle.
\end{eqnarray*}
Hence
\be
\Delta_{f-2\ln S}(S^{-1}|Rm|)\geq -5S^{-1}|Rm|^2.
\ee
We apply Lemma \ref{lma for ineq of S-2 Rc} and take $\lambda=192 A_0^2$, on $M\setminus B_{R_0}(p_0)$
%\begin{eqnarray}
%\notag
%\Delta_{f-2\ln S}\Big(S^{-1}|Rm|+\lambda S^{-2}|\Ric|^2\Big)&\geq&\big(\frac{\lambda}{32A_0^2}-5S\big)\Big(S^{-1}|Rm|+\lambda S^{-2}|\Ric|^2\Big)^2\\
%\notag
%&&-c\lambda^3-c\lambda\\
%\notag
%&=&\big(6-5S\big)\Big(S^{-1}|Rm|+\lambda S^{-2}|\Ric|^2\Big)^2\\
%\notag
%&&-c\lambda^3-c\lambda\\
%\label{ineq for S-1 Rm + s-2 Rc}
%&\geq&\Big(S^{-1}|Rm|+\lambda S^{-2}|\Ric|^2\Big)^2-c,
%\end{eqnarray}
\begin{eqnarray}
\notag
\Delta_{f-2\ln S}\Big(\frac{|Rm|}{S}+\lambda \frac{|\Ric|^2}{S^2}\Big)&\geq&\big(\frac{\lambda}{32A_0^2}-5S\big)\Big(\frac{|Rm|}{S}+\lambda \frac{|\Ric|^2}{S^2}\Big)^2\\
\notag
&&-c\lambda^3-c\lambda\\
\notag
&=&\big(6-5S\big)\Big(\frac{|Rm|}{S}+\lambda \frac{|\Ric|^2}{S^2}\Big)^2\\
\notag
&&-c\lambda^3-c\lambda\\
\label{ineq for S-1 Rm + s-2 Rc}
&\geq&\Big(\frac{|Rm|}{S}+\lambda \frac{|\Ric|^2}{S^2}\Big)^2-c,
\end{eqnarray}
where $c$ only depends on $A_0$ and $A_1$. Let $W:=\Big(S^{-1}|Rm|+\lambda S^{-2}|\Ric|^2\Big)$ and $G:=\phi^2 W$, where $\phi$ is a non-negative cut off function. By (\ref{ineq for S-1 Rm + s-2 Rc}), on the set where $\phi$ is positive,
\begin{eqnarray*}
\Delta_{f-2\ln S}G&=&\phi^2\Delta_{f-2\ln S} W+ 4\phi\la \na \phi,\na (\phi^2W\phi^{-2})\ra+ W\Delta_{f-2\ln S}\phi^2\\
&=&\phi^2\Delta_{f-2\ln S} W+4\la \na \ln \phi, \na G\ra-8|\na\phi|^2W\\
&&+ W\Delta_{f-2\ln S}\phi^2\\
&=&\phi^2\Delta_{f-2\ln S} W+4\la \na \ln \phi, \na G\ra\\
&&+(2\phi\D\phi+4\phi\la \na \ln S,\na \phi\ra-6|\na\phi|^2)W\\
&\geq&\phi^2W^2-c\phi^2+4\la \na \ln \phi, \na G\ra\\
&&+(2\phi\D\phi-4\phi^2|\na \ln S|^2-7|\na\phi|^2)W.
\end{eqnarray*}
By Lemma \ref{Rc by S} and (\ref{na S}), $|\na S|=|2\Ric(\na f)|\leq 2A_1 S$, we deduce that
\begin{eqnarray}
\label{ineq for G}
\phi^2\Delta_{f-2\ln S}G&\geq& G^2-c\phi^4+4\phi\la\na\phi, \na G\ra\\
\notag &&+(2\phi\D\phi-16\phi^2A_1^2-7|\na\phi|^2)G.
\end{eqnarray}
Let $R\geq R_0\geq 1$ and $\psi :[0,\infty)\rightarrow \R$ be a smooth real valued function satisfying the following:
$0\leq\psi\leq 1$, $\psi'\leq 0$,
\[ \psi(t)=\begin{cases}
      1 & 0\leq t\leq 1 \\
      0 & 2\leq t\\
   \end{cases}
\]
and
$$|\psi''(t)|+|\psi'(t)|\leq c \text{  for all  } t\geq 0.$$
We take $\phi(x):=\psi(\frac{r(x)}{R})$, then
\be
|\na \phi|=\frac{|\psi'|}{R}\leq \frac{c}{R}.
\ee
Using the Laplacian comparison theorem for smooth metric measure space with $\Ric_f\geq 0$ and $|\na f|\leq 1$ (\cite{WeiWylie-2009} c.f. Theorem 1.1 a)), we have
$$\D r\leq \frac{3}{r}+1.$$

\begin{eqnarray*}
\D \phi &=& \frac{\psi'}{R}\D r+\frac{\psi''}{R^2}|\na r|^2\\
&\geq&-\frac{c}{R}\big(\frac{3}{R}+1\big)-\frac{c}{R^2}\\
&\geq&-\frac{c_1}{R}.
\end{eqnarray*}
\begin{eqnarray}
\label{ineq for D phi2}
2\phi\D \phi-16\phi^2A_1^2-7|\na\phi|^2&\geq&-\frac{c_2}{R}-16A_1^2\\
\notag&\geq& -c,
\end{eqnarray}
where $c$ is a constant depending on $A_1$. Suppose $G$ attains its maximum at $q$. If $q$ $\in$ $\overline{B_{R_0}(p_0)}$, then
$$G\leq G(q)\leq W(q)\leq \sup_{\overline{B_{R_0}(p_0)}}W,$$
right hand side is obviously independent of $R$. If $q$ $\in$ $M\setminus \overline{B_{R_0}(p_0)}$, then by (\ref{ineq for G}) and (\ref{ineq for D phi2}), we have
\begin{eqnarray*}
0&\geq& \phi^2(q)\Delta_{f-2\ln S}G(q)\\
&\geq&G^2(q)-c-cG(q).
\end{eqnarray*}
From this we see that there exists a positive constant $C$ depending only on $A_0$ and $A_1$ such that
$$G\leq G(q)\leq C.$$
Result then follows by letting $R\to \infty$.
\end{proof}

%The constant $c$ in Theorem \ref{Rm by S} may depend on the local geometry of $(M,g)$ and provide no information on the size of $r|Rm|$ near infinity if one simply assumes $\overline{\lim}_{r\to \infty}rS$ is small. To have a better control on $\overline{\lim}_{r\to \infty}r|Rm|$, we look for a universal positive constant in front of the scalar curvature $S$. We need to refine our previous estimate on the Ricci curvature:

The constant $c$ in Theorem \ref{Rm by S} may depend on the local geometry of $(M,g)$ and provide no information on the size of $r|Rm|$ near infinity if one simply assumes $\overline{\lim}_{r\to \infty}rS$ is small. To prove Theorem \ref{decay estimate in dim 4} by Theorem \ref{decay estimate in dim n} and \ref{Rm by S}, we look for a universal positive constant in front of the scalar curvature $S$ and then estimate the size of $\overline{\lim}_{r\to \infty}r|Rm|$. To have a better control on $\overline{\lim}_{r\to \infty}r|Rm|$, we need to refine our previous estimate on the Ricci curvature:

\begin{prop}\label{Rc by A0S}Let $(M^4,g,f)$ be a four dimensional complete non Ricci flat gradient steady Ricci soliton with $\displaystyle\lim_{r\to \infty} S=0$. Suppose that $\displaystyle\lim_{r\to \infty}f=-\infty$ and $\displaystyle\overline{\lim_{r\to\infty}} rS<\infty$, then there exists a positive constant $C$ such that
\be
|\Ric|\leq A_0S+Cv^{-\frac{3}{2}}
\ee
outside some compact subset of $M$, where $v:=-f$. In particular,
\be
\overline{\lim_{r\to\infty}}r|\Ric|\leq A_0\overline{\lim_{r\to\infty}} rS.
\ee
\end{prop}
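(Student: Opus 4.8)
\emph{Approach.} The estimate is driven by the special combination $|\Ric|-A_0S$, chosen so that the quadratic curvature terms cancel. From the evolution equation $\D S=-2|\Ric|^2$ in \eqref{eqn of S} and the Kato-type inequality \eqref{ineq for abs val of Rc},
\be
\D|\Ric|\geq -2A_0|\Ric|^2-2A_0|\Ric|\frac{|\na\Ric|}{|\na f|},
\ee
subtracting $A_0$ times the former from the latter removes the $|\Ric|^2$ terms and leaves
\be\label{cancel}
\D(|\Ric|-A_0S)\geq -2A_0|\Ric|\frac{|\na\Ric|}{|\na f|}.
\ee
This cancellation is precisely why the universal constant $A_0$ (rather than $A_1$) appears in the conclusion. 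The plan is then to view $|\Ric|-A_0S$ as an approximate subsolution and to dominate it by a superharmonic barrier of the form $Kv^{-3/2}$ through the maximum principle, exactly as in the proofs of Proposition \ref{sharp estimate for u} and Theorem \ref{Rm by S}.

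For the barrier, \eqref{eqn of v} and \eqref{eqn of nav} give $\D v=1$ and $|\na v|^2=1-S$, whence
\be
\D(v^{-3/2})=-\tfrac{3}{2}v^{-5/2}+\tfrac{15}{4}v^{-7/2}(1-S)\leq -v^{-5/2}
\ee
outside a compact set, so $v^{-3/2}$ is $\D$-superharmonic at order $v^{-5/2}$. Since $\lim_{r\to\infty}f=-\infty$ and $\lim_{r\to\infty}S=0$, Lemma \ref{eqv cond for proper of f} yields $v\sim r$, while $\overline{\lim}_{r\to\infty}rS<\infty$ and Lemma \ref{Rc by S} give $|\Ric|\leq A_1S\leq Cv^{-1}$ and $|\na f|\geq\frac12$ near infinity. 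Comparing with the right-hand side of \eqref{cancel}, I see that the barrier will dominate provided I can prove the gradient decay
\be\label{grad-est}
|\na\Ric|\leq Cv^{-3/2}\qquad\text{near infinity,}
\ee
for then the right-hand side of \eqref{cancel} is bounded below by $-4A_0|\Ric|\,|\na\Ric|\geq -Cv^{-5/2}$.

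The main obstacle is \eqref{grad-est}, which lies deeper than the bound $|\Ric|-A_0S\leq Cv^{-1}$ returned by naive absorption of the gradient term: the decay of $S$ alone only yields $|\na\Ric|\leq Cv^{-1}$, and no algebraic identity controls the full tensor $\na\Ric=-\na^3f$, since its totally symmetric part is seen neither by the curl identity $\na_iR_{kl}-\na_lR_{ki}=R_{ilkj}f_j$ nor by the contracted Bianchi identity $\div\Ric=\Ric(\na f)$. To prove \eqref{grad-est} I would estimate $|\na Rm|$, noting that $\Ric$ is a contraction of $Rm$ and hence $|\na\Ric|\leq c|\na Rm|$. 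On a steady soliton the identity $\Ric_f=0$ makes the commutator $[\D,\na]$ algebraic in the curvature, so the system closes at one order: $\D\na Rm=Rm\ast\na Rm$, with no third-order reaction term, and therefore
\be
\D|\na Rm|^2=2|\na^2 Rm|^2+2\la\na Rm,\D\na Rm\ra\geq 2|\na^2 Rm|^2-C|Rm|\,|\na Rm|^2.
\ee
Feeding in $|Rm|\leq cS$ (Theorem \ref{Rm by S}), whose coefficient $C|Rm|\leq CcS\to 0$, and running a weighted maximum principle on $S^{-3}|\na Rm|^2$ with a cutoff built from the Laplacian comparison of \cite{WeiWylie-2009}—paralleling Lemma \ref{lma for ineq of S-2 Rc} and the proof of Theorem \ref{Rm by S}—should bound $S^{-3}|\na Rm|^2$, i.e. $|\na Rm|\leq CS^{3/2}\leq Cv^{-3/2}$. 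The delicate point, and the crux of the whole argument, is to supply the quadratic good term that forces boundedness; I expect this to come from the diffusion term $2|\na^2 Rm|^2$ together with a differentiated form of Lemma \ref{Rm to Rc}, combined with the vanishing of the reaction coefficient as $S\to 0$.

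Granting \eqref{grad-est}, the proof closes quickly. The function $\Phi:=|\Ric|-A_0S-Kv^{-3/2}$ then satisfies $\D\Phi\geq(K-C)v^{-5/2}>0$ outside a compact set once $K$ is large, and $\Phi\to 0$ at infinity; enlarging $K$ so that $\Phi\leq 0$ on $\partial B_{R_0}(p_0)$, the maximum principle forces $\Phi\leq 0$, that is $|\Ric|\leq A_0S+Kv^{-3/2}$ outside a compact set. Finally, since $v\sim r$ we have $rv^{-3/2}\sim v^{-1/2}\to 0$, so multiplying by $r$ and taking $\overline{\lim}_{r\to\infty}$ gives $\overline{\lim}_{r\to\infty}r|\Ric|\leq A_0\,\overline{\lim}_{r\to\infty}rS$, as claimed.
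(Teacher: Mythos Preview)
Your overall architecture—the cancellation $\D(|\Ric|-A_0S)\geq -2A_0|\Ric|\,|\na\Ric|/|\na f|$, the barrier $v^{-3/2}$, and the maximum principle on $|\Ric|-A_0S-Cv^{-3/2}$—is exactly the paper's proof. The only divergence is in how you obtain the gradient decay $|\na\Ric|\leq Cv^{-3/2}$, which you correctly flag as the crux. The paper dispatches it in one line by invoking Shi's local derivative estimate: the steady soliton defines an ancient Ricci flow $g(t)=\phi_t^*g$, and since $|Rm|\leq C_0/v$ outside a compact set (from Theorem~\ref{Rm by S} together with $v\sim r$), Shi's estimate applied on a parabolic neighborhood of scale comparable to $v(x)$ immediately yields $|\na Rm|\leq C_1v^{-3/2}$, hence $|\na\Ric|\leq c\,|\na Rm|\leq C_1v^{-3/2}$.

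Your alternative—a weighted maximum principle on $S^{-3}|\na Rm|^2$—is in the spirit of Shi's own proof but is left incomplete. The inequality $\D|\na Rm|^2\geq 2|\na^2 Rm|^2-C|Rm|\,|\na Rm|^2$ carries no quadratic good term in $|\na Rm|$ itself; the Hessian term $|\na^2 Rm|^2$ does not control $(S^{-3}|\na Rm|^2)^2$, and the ``differentiated form of Lemma~\ref{Rm to Rc}'' you invoke is neither stated nor obviously available. To close such an argument one must couple $|\na Rm|^2$ with $|Rm|^2$ (borrowing the good term $2|\na Rm|^2$ from $\D|Rm|^2$) and run a cutoff-and-absorption scheme—which is precisely how Shi's estimate is proved. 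So your route is not wrong in principle, but it amounts to re-deriving Shi's theorem; citing it, as the paper does, is far simpler.
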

\begin{proof}Since $\displaystyle\overline{\lim_{r\to\infty}} rS<\infty$, we have by Lemma \ref{eqv cond for proper of f} and Theorem \ref{Rm by S} there exists a constant $C_0$ such that outside a compact set,
\be\label{C0 estimate of Rm}
|\Ric|\leq c|Rm|\leq CS\leq \frac{C_0}{v}.
\ee
By viewing steady soliton as a solution to the Ricci flow and Shi's derivative estimate \cite{ChowLuNi-2006},
\be\label{C1 estimate of Rm}
|\na\Ric|\leq c|\na Rm|\leq \frac{C_1}{v^{\frac{3}{2}}}.
\ee
Let $R_0$ be a large positive number such that on $M\setminus B_{R_0}(p_0)$, $|\na f|\geq \frac{1}{2}$, $v>1$, (\ref{C0 estimate of Rm}) and (\ref{C1 estimate of Rm}) hold, moreover, $v$ satisfies
$$\frac{1}{2}-\frac{15}{4}v^{-1}\geq 0.$$
We then choose a big positive constant $C$ such that $C\geq 4A_0C_0C_1+1$ and
$$|\Ric|-A_0S-Cv^{-\frac{3}{2}}<0 \text{  on  } \partial B_{R_0}(p_0).$$
By (\ref{eqn of S}) in Section $2$, (\ref{ineq for abs val of Rc}) in Lemma \ref{Rc by S}, (\ref{C0 estimate of Rm}) and (\ref{C1 estimate of Rm}), we have
\begin{eqnarray*}
\D(|\Ric|-A_0S-Cv^{-\frac{3}{2}})&\geq& -2A_0|\Ric|^2-2A_0|\Ric|\frac{|\na \Ric|}{|\na f|}+2A_0|\Ric|^2\\
&&+\frac{3C}{2}v^{-\frac{5}{2}}\D v-\frac{15C}{4}|\na v|^2v^{-\frac{7}{2}}\\
&\geq&-2A_0|\Ric|\frac{|\na \Ric|}{|\na f|}+\frac{3C}{2}v^{-\frac{5}{2}}-\frac{15C}{4}v^{-\frac{7}{2}}\\
&\geq&-4A_0C_0C_1v^{-\frac{5}{2}}+\frac{3C}{2}v^{-\frac{5}{2}}-\frac{15C}{4}v^{-\frac{7}{2}}\\
&=&v^{-\frac{5}{2}}\Big(\frac{3C}{2}-4A_0C_0C_1-\frac{15C}{4}v^{-1}\Big)\\
&\geq&v^{-\frac{5}{2}}\Big(C-4A_0C_0C_1\Big)\\
&\geq&v^{-\frac{5}{2}}\\
&>&0.
\end{eqnarray*}
Since $\displaystyle\lim_{r\to \infty}(|\Ric|-A_0S-Cv^{-\frac{3}{2}})=0$, by the maximum principle,
$$|\Ric|-A_0S-Cv^{-\frac{3}{2}}\leq 0 \text{  on  }      M\setminus B_{R_0}(p_0).$$
\end{proof}
For the reader's convenience, we recall the statement of Theorem \ref{decay estimate in dim 4}:
\begin{thm*}Let $(M^4,g,f)$ be a four dimensional complete non Ricci flat gradient steady Ricci soliton with $\displaystyle\lim_{r\to \infty} S=0$. Suppose the potential function $f$ is bounded from above by a constant and $\displaystyle\overline{\lim_{r\to\infty}} rS<\frac{1}{5A_0^2}.$ Then there is a constant $C>0$ such that
$$|Rm|\leq Ce^{-r} \text{ on  } M.$$
\end{thm*}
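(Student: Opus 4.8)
The plan is to deduce Theorem \ref{decay estimate in dim 4} directly from Theorem \ref{decay estimate in dim n} by verifying that the curvature hypothesis $\limsup_{r\to\infty} r|Rm| < \frac{1}{5}$ holds. Since $f$ is bounded above and $S\to 0$, the potential satisfies $\limsup_{r\to\infty} r^{-1}f \le 0 < 1$, so condition $(i)$ of Lemma \ref{eqv cond for proper of f} holds and we obtain $f\to-\infty$ together with $\lim_{r\to\infty} r^{-1}f = -1$; in particular $v:=-f\to\infty$ and, by part $(iii)$, $v$ is comparable to $r$. With $f\to-\infty$ and $\limsup_{r\to\infty} rS<\infty$ now in hand, Proposition \ref{Rc by A0S} applies and yields the key universal bound $\limsup_{r\to\infty} r|\Ric| \le A_0 \limsup_{r\to\infty} rS$.

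Next I would pass from $\Ric$ back to the full curvature tensor via Lemma \ref{Rm to Rc}, which gives $|Rm| \le A_0\big(|\Ric| + |\na\Ric|/|\na f|\big)$ wherever $\na f\neq 0$. Multiplying by $r$ and taking $\limsup$, the first term contributes $A_0\limsup_{r\to\infty} r|\Ric|$. For the gradient term, I would use that $Rm$ is bounded (Theorem \ref{CaoCui estimate}) and that $|Rm|\le cS\le C/v$ by Theorem \ref{Rm by S} together with $rS$ bounded, so that Shi's derivative estimate for the associated Ricci flow produces $|\na\Ric| \le C v^{-3/2}$, exactly as in (\ref{C1 estimate of Rm}). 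Since $|\na f|^2 = 1 - S \to 1$ by (\ref{eqn of naf}) and $v$ is comparable to $r$, the quantity $r|\na\Ric|/|\na f|$ behaves like $r\cdot r^{-3/2} = r^{-1/2} \to 0$, so the derivative term drops out of the $\limsup$ entirely.

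Combining these, $\limsup_{r\to\infty} r|Rm| \le A_0\limsup_{r\to\infty} r|\Ric| \le A_0^2 \limsup_{r\to\infty} rS < A_0^2\cdot\frac{1}{5A_0^2} = \frac{1}{5}$, where the strictness of the last inequality is inherited directly from the hypothesis $\limsup_{r\to\infty} rS < \frac{1}{5A_0^2}$. With $f$ bounded above and this strict bound on $\limsup_{r\to\infty} r|Rm|$ in place, Theorem \ref{decay estimate in dim n} immediately gives $|Rm|\le Ce^{-r}$ on $M$, completing the proof. The main obstacle is securing the sharp \emph{universal} coefficient $A_0$ (and hence $A_0^2$) in front of the scalar curvature: the estimate $|Rm|\le cS$ of Theorem \ref{Rm by S} alone carries a constant $c$ depending on the local geometry and therefore gives no control on the size of $\limsup_{r\to\infty} r|Rm|$. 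It is precisely the refined inequality $\limsup_{r\to\infty} r|\Ric| \le A_0\limsup_{r\to\infty} rS$ of Proposition \ref{Rc by A0S}, combined with the vanishing of the derivative contribution, that is needed to push the value genuinely below the threshold $\frac{1}{5}$.
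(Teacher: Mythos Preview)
Your proposal is correct and follows essentially the same route as the paper's proof: use Lemma \ref{eqv cond for proper of f} to get $f\to-\infty$, invoke Proposition \ref{Rc by A0S} for the universal bound $\limsup r|\Ric|\le A_0\limsup rS$, combine with Lemma \ref{Rm to Rc} and Shi's estimate (\ref{C1 estimate of Rm}) to kill the gradient term and obtain $\limsup r|Rm|\le A_0^2\limsup rS<\tfrac{1}{5}$, then conclude via Theorem \ref{decay estimate in dim n}. Your explanation of why the non-universal constant in Theorem \ref{Rm by S} is insufficient and why Proposition \ref{Rc by A0S} is the decisive ingredient matches the paper's own commentary.
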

\begin{proof}Since $S\to 0$ as $r\to \infty$ and $f$ is bounded from above, we know by Lemma \ref{eqv cond for proper of f} that $f\to -\infty$ as $r\to \infty$. By Proposition \ref{Rc by A0S}, Shi's estimate (\ref{C1 estimate of Rm}) and Lemma \ref{Rm to Rc},
$$|Rm|\leq A_0^2S+CA_0v^{-\frac{3}{2}}+2C_1A_0v^{-\frac{3}{2}}.$$
It implies that
\begin{eqnarray*}
\displaystyle\overline{\lim_{r\to \infty}} r|Rm|&\leq& A_0^2\overline{\lim_{r\to\infty}} rS\\
&<& \frac{1}{5}.
\end{eqnarray*}
Hence by Theorem \ref{decay estimate in dim n}, $|Rm|$ decays exponentially.

\end{proof}

\end{document}